\theoremstyle{plain}
\newtheorem{thm}{Theorem}[section]
\newtheorem{theorem}[thm]{Theorem}
\newtheorem{corollary}[thm]{Corollary}
\newtheorem{conjecture}[thm]{Conjecture}
\theoremstyle{definition}
\newtheorem{definition}[thm]{Definition}
\newtheorem{remark}[thm]{Remark}
\newtheorem{example}[thm]{Example}
\newtheorem{question}[thm]{Question}
\newtheorem{thevarthm}[thm]{\varthmname}
\newenvironment{varthm*}[1]{\trivlist\item[]{\bf #1.}\it}{\endtrivlist}
\renewcommand\geq{\geqslant}
\renewcommand\leq{\leqslant}
\newcommand\be{\begin{eqnarray*}}
\newcommand\ee{\end{eqnarray*}}
\newcommand\newop[2]{\def#1{\mathop{\rm #2}\nolimits}}
\newop\log{log}
\newop\ord{ord}
\newop\Gal{Gal}
\newop\SL{SL}
\newop\Bl{Bl}
\newop\mult{mult}
\newop\mass{mass}
\newop\div{div}
\newop\codim{codim}
\newop\sing{sing}
\newop\vdim{vdim}
\newop\edim{edim}
\newop\Ass{Ass}
\newop\size{size}
\newop\reg{reg}
\newop\satdeg{satdeg}
\newop\supp{supp}
\newop\Neg{Neg}
\newop\Nef{Nef}
\newop\Nefh{Nef_H}
\newop\Eff{Eff}
\newop\Zar{Zar}
\newop\MB{MB}
\newop\MBxC{MB\mathit{(x,C)}}
\newop\NnB{NnB}
\newop\Bigg{Big}
\newop\Effbar{\overline{\Eff}}
\def\keywordname{{\bfseries Keywords}}%
\def\keywords#1{\par\addvspace\medskipamount{\rightskip=0pt plus1cm
\def\and{\ifhmode\unskip\nobreak\fi\ $\cdot$
}\noindent\keywordname\enspace\ignorespaces#1\par}}
\def\subclassname{{\bfseries Mathematics Subject Classification
(2000)}\enspace}
\def\subclass#1{\par\addvspace\medskipamount{\rightskip=0pt plus1cm
\def\and{\ifhmode\unskip\nobreak\fi\ $\cdot$
}\noindent\subclassname\ignorespaces#1\par}}
\begin{document}
\title{Curve configurations in the projective plane and their characteristic numbers}
\author{Adam Czapli\'nski, Piotr Pokora}

\date{\today}
\maketitle
\thispagestyle{empty}
\begin{abstract}
In this paper we study the concept of characteristic numbers and Chern slopes in the context of curve configurations in the real and complex projective plane. We show that some extremal line configurations inherit the same asymptotic invariants, namely asymptotic Chern slopes and asymptotic Harbourne constants which sheds some light on relations between the bounded negativity conjecture and the geography problem for surfaces of general type. We discuss some properties of Kummer extensions, especially in the context of ball-quotients. Moreover, we prove that for a certain class of smooth curve configurations in the projective plane their characterstic numbers are bounded by $8/3$.
\keywords{curve configurations, characteristic numbers, orchard problem, Hirzebruch inequality, blow-ups, negative curves, bounded negativity conjecture}
\subclass{14C20, 52C35}
\end{abstract}

%*****************************************************************************
\section{Introduction}

In the first part of this paper we investigate somewhat extremal properties of line configurations in the real and complex projective plane. Our motivation to study these line configurations is twofold, namely line configurations appear naturally in combinatorics (for instance in the so-called orchard problem \cite{GT}) and it turned out that there is a surprising application of line configurations in the context of the so-called bounded negativity conjecture and the local negativity of surfaces. Let us present a short outline on the mentioned problems.
\begin{definition}
Let $X$ be a smooth projective surface. We say that $X$ has the \emph{bounded negativity property} if there exists an integer $b(X)$ such that for \emph{every reduced curve} $C \subset X$ one has the bound
$$C^{2} \geq -b(X).$$
\end{definition}
The above definition leads to the bounded negativity conjecture \cite{Duke, Harbourne1}, which is one of the most natural and intriguing problems in the theory of algebraic surfaces.
\begin{conjecture}[BNC]
An arbitrary smooth complex projective surface has bounded negativity.
\end{conjecture}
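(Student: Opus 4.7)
The Bounded Negativity Conjecture is a famously open problem, so my plan cannot be a blueprint for a complete proof; it is an outline of the most promising partial strategies and of the obstacles that make the conjecture difficult. The first step I would take is to reduce the reduced case to the irreducible case: writing $C = \sum_i C_i$ with distinct irreducible components, one has
\[
C^2 \;=\; \sum_i C_i^2 \;+\; 2\sum_{i<j} C_i \cdot C_j \;\ge\; \sum_i C_i^2,
\]
so a uniform lower bound on $C_i^2$ for each irreducible $C_i$, together with control on how many components can simultaneously be negative, would suffice.

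For surfaces on which $-K_X$ is nef, or more generally whenever $K_X$ admits a $\Q$-effective representative, this reduction actually succeeds: by adjunction and the Hodge index theorem, any irreducible curve $C$ satisfies $C^2 \ge -K_X\cdot C - 2$, and the negative curves are rigid, lying in the support of the negative part of the Zariski decomposition of $K_X$, of which there are only finitely many. I would therefore begin by exhausting all such ``canonically positive'' cases, and then attempt the remaining surfaces by means of a logarithmic Bogomolov--Miyaoka--Yau inequality applied to the pair $(X,C)$. Concretely, after resolving $\sing(C)$ by a birational morphism $\pi\colon \tilde X \to X$ with simple normal crossing boundary $\tilde C$, the plan is to convert a hypothetical lower bound on $C^2$ into a bound on the logarithmic Chern numbers of $(\tilde X, \tilde C)$, so that Hirzebruch-type inequalities and the Harbourne constants considered later in the paper mediate between the global invariant $C^2$ and the local multiplicities of the singularities of $C$.

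The main obstacle, and the reason the conjecture has resisted proof for decades, is that the logarithmic Miyaoka--Yau inequality requires $(\tilde X, \tilde C)$ to be of log general type; this fails when $X$ is not of general type or when $C$ has too many rational components, and the fractional coefficients that must be introduced to apply it to an arbitrary reduced boundary are difficult to control uniformly. Moreover, in positive characteristic the analogous statement is known to fail via Frobenius pullbacks, so any proof must invoke an intrinsically characteristic-zero input (a K\"ahler--Einstein metric, Hodge-theoretic vanishing, or an orbifold uniformisation), and no single such mechanism is presently known to span arbitrary surfaces. A realistic intermediate goal, entirely in the spirit of the present paper, is to establish BNC within specific classes---blow-ups of $\P^2$ at configurations of points, Kummer extensions, and ball quotients---by computing the Hirzebruch and Harbourne constants attached to those configurations, and this is the line of attack I would pursue rather than a direct assault on the full conjecture.
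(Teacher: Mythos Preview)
The statement you were asked to prove is labelled \emph{Conjecture} in the paper, and the paper offers no proof of it whatsoever: BNC is introduced as ``one of the most natural and intriguing problems in the theory of algebraic surfaces'' and then used only as motivation for the Harbourne constants studied afterwards. You correctly recognise this and explicitly present a survey of partial strategies rather than a proof, so in that sense your submission is appropriate and there is nothing in the paper to compare it against.

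That said, a few of your mathematical remarks are imprecise. The displayed reduction $C^2\ge\sum_i C_i^2$ is correct, but it does \emph{not} by itself reduce BNC for reduced curves to BNC for irreducible curves: the number of irreducible components is unbounded, and you cannot in general ``control how many components can simultaneously be negative'' (a blow-up of $\P^2$ at sufficiently many general points, or many K3 surfaces, carry infinitely many irreducible negative curves). Your adjunction inequality $C^2\ge -K_X\cdot C-2$ is fine, but the subsequent sentence conflates two different hypotheses: when $-K_X$ is nef one gets $C^2\ge -2$ directly and there is no Zariski decomposition of $K_X$ to speak of; when instead $K_X$ is $\Q$-effective, the negative part of its Zariski decomposition has finitely many components, but an arbitrary negative curve on $X$ need not lie in that support, so this does not immediately give BNC for non-minimal surfaces of non-negative Kodaira dimension. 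Finally, the logarithmic Miyaoka--Yau approach you sketch is indeed the mechanism behind Hirzebruch's inequality and the Harbourne-constant bounds in the paper, but as you yourself note it only constrains $C^2$ relative to the number of singular points rather than absolutely, which is why the paper works with the normalised quantity $H(\P^2;\mathcal P)$ rather than with BNC itself.
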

It turns out that the BNC is not true over fields of positive characteristics due to the existence of the Frobenius morphism, which allows to construct easily a counterexample. 

On the other hand, it is easy to construct surfaces with curves having arbitrary low self-intersection numbers. The easiest one is the blow up of the real or complex projective plane along a set of $s >> 0$ collinear points $P_{1}, ..., P_{s}$. Then one has $L_{P_{1}, ..., P_{s}} = H-E_{1} - ... -E_{s}$, where $H$ is the pull-back of $\mathcal{O}_{\mathbb{P}^{2}}(1)$, $E_{1}, ..., E_{s}$ are the exceptional divisors, and $(L_{P_{1}, ..., P_{s}})^{2} = -s+1$. In order to avoid such trivialities one can consider the asymptotic version of the self-intersection numbers, namely in the case of blow ups $f: X_{s} \rightarrow \mathbb{P}^{2}$ of the projective plane along sets $\mathcal{P} = \{P_{1}, ..., P_{s}\}$ of mutually distinct  $s \geq 1$  points we define the following constant
\begin{equation}
\label{eq:Harborne}
H(\mathbb{P}^{2};\mathcal{P}):= \inf \frac{ \left(f^{*}C-\sum_{i=1}^s {\rm mult}_{P_i}C \cdot E_i \right)^2}{s},
\end{equation}
where the infimum is taken over all \emph{reduced} curves $C\subset \mathbb{P}^{2}$.
This constant is called \emph{the local Harbourne constant at $\mathcal{P}$}, and this notion was introduced for the first time in \cite{BdRHHLPSz}.
It turns out that there exists a uniform bound for the so-called \emph{linear Harbourne constants}, which will be denoted in the sequel by $H_{L}$. Instead of considering all possible curves, we restrict the setting of the local Harbourne constants to the strict transforms of \emph{reduced line configurations}.
\begin{theorem}{\cite[Theorem~3.3]{BdRHHLPSz}}
Let $\mathcal{L}$ be a line configuration in $\mathbb{P}^{2}_{\mathbb{C}}$, then one has $$H_{L}(\mathbb{P}_{\mathbb{C}}^{2}; {\rm Sing}(\mathcal{L})) \geq -4,$$
where ${\rm Sing}(\mathcal{L})$ denotes the singular locus of a line configuration $\mathcal{L}$.
\end{theorem}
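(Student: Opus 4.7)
The plan is to test the infimum with the line arrangement $\mathcal{L}$ itself. Writing $d$ for the number of lines, $t_k$ for the number of points of multiplicity exactly $k \geq 2$, and $s = \sum_{k \geq 2} t_k$, the strict transform on the blow-up $f\colon X \to \P^2$ at $\mathrm{Sing}(\mathcal{L})$ equals $\widetilde{\mathcal{L}} = dH - \sum_i m_i E_i$, and so $\widetilde{\mathcal{L}}^2 = d^2 - \sum_{k\geq 2} k^2 t_k$. The pair-counting identity $\sum_{k\geq 2}\binom{k}{2} t_k = \binom{d}{2}$ simplifies this to $\widetilde{\mathcal{L}}^2 = d - \sum_{k \geq 2} k\, t_k$, so the claimed bound $\widetilde{\mathcal{L}}^2 \geq -4s$ becomes the purely combinatorial statement
$$\sum_{k \geq 2}(k-4)\, t_k \;\leq\; d.$$

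Next, I would dispose of the near-pencil cases directly. When $t_d = 1$ all lines are concurrent, so $\widetilde{\mathcal{L}}^2 = 0$ and there is nothing to check. When $t_{d-1} = 1$ the arrangement has $t_2 = d-1$ additional double points, hence $s = d$, and a direct count gives $\widetilde{\mathcal{L}}^2 = -2d + 3$, yielding the quotient $-2 + 3/d > -4$. In every other case one has $t_d = t_{d-1} = 0$, which is exactly the hypothesis required for the Hirzebruch inequality. Small values of $d$ outside these cases involve only multiplicities with $k \leq 4$ and so contribute nonpositively to the left-hand side.

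The key third step is to invoke the complex Hirzebruch inequality, a consequence of the logarithmic Bogomolov--Miyaoka--Yau inequality applied to a suitable Kummer cover branched over $\mathcal{L}$:
$$t_2 + \tfrac{3}{4}\, t_3 \;\geq\; d + \sum_{k \geq 5}(2k - 9)\, t_k.$$
Since $2k - 9 \geq k - 4$ for every $k \geq 5$, this produces the chain
$$\sum_{k \geq 5}(k - 4)\, t_k \;\leq\; \sum_{k \geq 5}(2k-9)\, t_k \;\leq\; t_2 + \tfrac{3}{4}\, t_3 - d \;\leq\; d + 2 t_2 + t_3,$$
and the last inequality, once rearranged, is precisely $\sum_{k \geq 2}(k-4) t_k \leq d$.

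The main obstacle is the preliminary check that Hirzebruch's inequality is actually applicable, i.e.\ the careful handling of the degenerate arrangements where the hypothesis $t_d = t_{d-1} = 0$ fails. Once those corner cases are dispatched by hand, the remaining argument is the short combinatorial chain displayed above, and the factor $-4$ emerges naturally from matching the coefficient $2k-9$ in Hirzebruch against the coefficient $k-4$ forced by the self-intersection calculation.
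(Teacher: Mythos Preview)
Your approach is exactly the one the paper indicates (it does not spell out a proof, only remarks that the result ``is based on Hirzebruch's inequality''), and the computation reducing the bound to the combinatorial inequality $\sum_{k\ge 2}(k-4)t_k\le d$ together with the separate treatment of the near-pencil cases is correct.

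Two points deserve attention. First, your opening sentence ``test the infimum with $\mathcal{L}$ itself'' hides a genuine step: by definition $H_L(\mathbb{P}^2;\mathrm{Sing}(\mathcal{L}))$ is an infimum over \emph{all} line configurations $C$, with the blow-up performed at $\mathcal{P}=\mathrm{Sing}(\mathcal{L})$, not at $\mathrm{Sing}(C)$. Your argument only treats the case $C=\mathcal{L}$. The missing reduction is short: for an arbitrary test configuration $C$, points of $\mathcal{P}\setminus\mathrm{Sing}(C)$ have multiplicity at most $1$ and contribute at most $-1$ each to $\widetilde{C}^2$, while points of $\mathrm{Sing}(C)\setminus\mathcal{P}$ have $m_P^2\ge 4$, so dropping them from the full singular locus only improves the inequality. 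Hence the case $\mathcal{P}=\mathrm{Sing}(C)$ implies the general one; you should state this explicitly.

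Second, you invoke the sharper Hirzebruch inequality with coefficients $2k-9$, whereas the paper only records the weaker form with coefficients $r-4$. In fact the weaker form already suffices: from $\sum_{r\ge 5}(r-4)t_r\le t_2+\tfrac34 t_3-d$ one gets directly $\sum_{k\ge 2}(k-4)t_k\le -t_2-\tfrac14 t_3-d\le d$. Your detour through $2k-9$ and the trailing ``$\le d+2t_2+t_3$'' works but is unnecessarily roundabout.
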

In fact, the authors have shown a little bit stronger result, namely one can replace sets ${\rm Sing}(\mathcal{L})$ by arbitrary sets of mutually distinct points $\mathcal{P}$. The proof of this result is based on Hirzebruch's inequality. Given a configuration of lines $\mathcal{L}$ then by $t_{k}$ we denote the number of $k$-fold points (points at which exactly $k$ lines from $\mathcal{L}$ intersect).
\begin{theorem}[Hirzebruch's inequality]
Let $\mathcal{L}$ be a configuration of $d \geq 4$ lines in $\mathbb{P}_{\mathbb{C}}^{2}$ such that $t_{d} = t_{d-1} = 0$. Then one has
$$t_{2} + \frac{3}{4}t_{3} \geq d + \sum_{r \geq 5}(r-4)t_{r}.$$
\end{theorem}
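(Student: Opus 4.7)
The plan is to follow Hirzebruch's original strategy, which combines an abelian Galois cover branched along the lines with the Miyaoka--Yau inequality applied to the resulting surface of general type. The combinatorial hypothesis $t_d = t_{d-1} = 0$ is what guarantees that the cover is not uniruled or merely elliptic: it excludes pencils and near-pencils, which are precisely the configurations where the construction degenerates.

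First I would blow up all singular points of $\mathcal{L}$ having multiplicity $r \geq 3$, obtaining $\pi\colon Y \to \mathbb{P}^2$ together with exceptional divisors $E_p$ and the strict transforms $\tilde{L}_1,\dots,\tilde{L}_d$ of the lines. After blowing up, the divisor $\tilde{\mathcal{L}} = \sum \tilde{L}_i$ is simple normal crossing, and its combinatorics records the $t_r$ explicitly. Next, for a prime $n \geq 2$, I would construct the Kummer cover $f\colon X_n \to Y$ of degree $n^{d-1}$ ramified along $\tilde{\mathcal{L}}$, realized as the normalization of the fiber product of the $n$-th root covers associated to the linear forms cutting out the $L_i$. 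Under the standing hypotheses, for $n$ sufficiently large the canonical class $K_{X_n}$ becomes big and nef, so $X_n$ is a minimal surface of general type and the Bogomolov--Miyaoka--Yau inequality $c_1^2(X_n) \le 3 c_2(X_n)$ applies.

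The core of the argument is then to compute $c_1^2(X_n)$ and $c_2(X_n)$ in closed form as polynomials in $n$ with coefficients depending on $d$ and the $t_r$. Using the standard formulas for abelian covers and the Euler characteristic of the complement of $\tilde{\mathcal{L}}$ in $Y$, both Chern numbers factor as $n^{d-2}$ times an expression linear in $n$, and the leading coefficient isolates a linear combination of $d$ and the $t_r$. Plugging these into BMY, dividing by the common factor $n^{d-2}$, and extracting the coefficient of the dominant term in $n$ gives an inequality whose rearrangement is exactly
\[
t_2 + \tfrac{3}{4}t_3 \;\ge\; d + \sum_{r \geq 5}(r-4)t_r,
\]
with $t_4$ contributing nothing because $r-4=0$ at $r=4$.

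The main obstacle is the careful Chern-class bookkeeping: every $r$-fold point of $\mathcal{L}$ contributes to both $c_1^2$ and $c_2$ via the exceptional divisor $E_p$ on $Y$ and then via its preimages in $X_n$, and these contributions must be tracked through $\pi^\ast$, through the cover formulas, and through the correction coming from the branch locus. A secondary technical point is verifying ampleness (equivalently, minimality of general type) of $X_n$ for large $n$ under $t_d=t_{d-1}=0$, which requires ruling out contractible $(-1)$ and $(-2)$-curves on $X_n$ via an analysis of how $\tilde{L}_i$ and $E_p$ pull back. Once these two points are settled, Hirzebruch's inequality falls out by direct comparison of coefficients.
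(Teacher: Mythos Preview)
Your overall strategy---build the Kummer cover, compute Chern numbers, apply Bogomolov--Miyaoka--Yau---is exactly the route Hirzebruch took, and it is the one the paper invokes (the paper does not reprove the inequality but records the Chern-number formulas in Theorem~2.8 and the polynomial $P_{\mathcal{L}}(n)=(3c_2-c_1^2)/n^{k-3}$ in Section~2.3). However, two concrete steps in your outline would not go through as written.

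First, the polynomial structure is off. After dividing by the common power of $n$, the Chern numbers are \emph{quadratic} in $n$, not linear: with $d$ lines one has
\[
\frac{3c_2(Y_n)-c_1^2(Y_n)}{n^{d-3}} \;=\; n^{2}(f_{0}-d) + 2n(d-f_{1}+f_{0}) + (2f_{1}+f_{0}-d-4t_{2}),
\]
so three coefficients are in play, not two.

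Second, and more seriously, the inequality does \emph{not} fall out of the leading coefficient as $n\to\infty$. The top coefficient is merely $f_0-d$, i.e.\ the statement that the number of singular points is at least the number of lines; that is far weaker than Hirzebruch's bound. The combinatorial content comes from evaluating $P_{\mathcal{L}}(n)\ge 0$ at the specific value $n=3$, which yields $t_2+t_3\ge d+\sum_{r\ge5}(r-4)t_r$. To sharpen the coefficient on $t_3$ from $1$ to $\tfrac{3}{4}$ one needs an additional step your sketch omits: over each triple point the exceptional divisor pulls back to a configuration of $(-1)$-curves on $Y_n$ that can be contracted, and applying Miyaoka's extension of BMY (allowing the resulting quotient singularities) to the contracted surface produces the extra saving. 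Without this refinement your argument, even once the evaluation point is corrected, only reaches the weaker inequality with $t_3$ in place of $\tfrac{3}{4}t_3$.
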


Let us now restrict out attention to the case of the real numbers. In 1941 E. Melchior \cite{M41} has shown that for configurations of $k \geq 3$ lines with $t_{k}=0$ in the real projective plane one has the following inequality
$$t_{2} \geq 3 + \sum_{k \geq 4} (k-3)t_{k}.$$
Using the above inequality one can show the following result.
\begin{theorem}{\cite[Theorem~3.15]{BdRHHLPSz}}
\label{th:real}
Let $\mathcal{L}$ be a line configuration in $\mathbb{P}^{2}_{\mathbb{R}}$. Then one has
$$H_{L}(\mathbb{P}^{2}_{\mathbb{R}}; {\rm Sing}(\mathcal{L})) \geq -3.$$
\end{theorem}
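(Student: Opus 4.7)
The plan is to reduce the claim to a direct application of Melchior's inequality. Let $\mathcal{L}$ be a configuration of $d \geq 2$ lines in $\mathbb{P}^2_{\mathbb{R}}$, write $t_r$ for the number of $r$-fold singular points, and set $s := \sum_{r \geq 2} t_r = |\mathrm{Sing}(\mathcal{L})|$. First I would blow up $\mathrm{Sing}(\mathcal{L})$ via $f\colon X_s \to \mathbb{P}^2_{\mathbb{R}}$ and compute the self-intersection of the strict transform of $\mathcal{L}$:
$$\widetilde{\mathcal{L}}^{\,2} \;=\; d^2 - \sum_{r \geq 2} r^2 \, t_r.$$
Using the combinatorial identity $\binom{d}{2} = \sum_{r \geq 2}\binom{r}{2}\, t_r$ (every pair of lines meets in exactly one singular point), I would replace $\sum_r r^2 t_r$ by $d(d-1) + \sum_r r\, t_r$ to obtain the compact formula $\widetilde{\mathcal{L}}^{\,2} = d - \sum_{r\geq 2} r\, t_r$.

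Dividing by $s$, the target inequality $H_L(\mathbb{P}^2_{\mathbb{R}}; \mathrm{Sing}(\mathcal{L})) \geq -3$ rearranges, via $\sum_{r \geq 2}(r-3) t_r = -t_2 + \sum_{r \geq 4}(r-3) t_r$, into the equivalent form
$$d + t_2 \;\geq\; \sum_{r \geq 4} (r-3)\, t_r.$$
Next I would split into two cases. If $\mathcal{L}$ is a pencil (all $d$ lines concurrent), then $t_d = 1$ and all other $t_r$ vanish, so $\widetilde{\mathcal{L}}^{\,2} = 0$ and the bound holds trivially. Otherwise $t_d = 0$, Melchior's inequality applies and gives
$$\sum_{r \geq 4} (r-3)\, t_r \;\leq\; t_2 - 3,$$
from which the displayed rearrangement follows at once, with slack of at least $d + 3 \geq 4$ to spare.

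I do not anticipate a serious obstacle; once Melchior is in place, the bound $-3$ drops out essentially with room to spare. The step requiring the most care is the algebraic rearrangement of the Harbourne-type quotient into a form to which Melchior applies verbatim, together with a clean separation of the degenerate cases (pencils and $d \leq 2$) in which the hypothesis $t_d = 0$ of Melchior's theorem fails.
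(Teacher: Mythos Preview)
Your proposal is correct and follows exactly the approach the paper indicates: the paper does not give a detailed proof but simply states that the bound follows from Melchior's inequality, and your reduction of the quotient $(d-\sum_r r t_r)/f_0 \geq -3$ to the inequality $d+t_2 \geq \sum_{r\geq 4}(r-3)t_r$, then invoking Melchior (together with the trivial pencil case), is precisely that route. The algebraic rewriting via the combinatorial identity $d^2-d=f_2-f_1$ is the same simplification the paper itself uses later in the proof of Theorem~\ref{thm:geometric}.
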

It is worth pointing out that Melchior's inequality is related to the famous orchard problem, i.e., using this inequality one sees that the number of \emph{ordinary lines} in the real projective plane is at least $3$ provided that there are at least $3$ non-collinear points.

In general, our expectation is that the local Harbourne constants are bounded from below uniformly.

\begin{conjecture}(Weak Local Negativity)
Let $C \subset \mathbb{P}^{2}_{\mathbb{C}}$ be a reduced curve. Then
$$H(\mathbb{P}^{2}_{\mathbb{C}}; {\rm Sing}(C)) \geq -4.$$
\end{conjecture}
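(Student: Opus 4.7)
The plan is to reformulate the statement as a universal inequality relating the degree and the multiplicities of an arbitrary reduced test curve, and then to attempt to extract such an inequality from a logarithmic Bogomolov--Miyaoka--Yau type inequality (in the spirit of Sakai, Kobayashi--Nakamura--Sakai and Langer), playing the role that Hirzebruch's inequality plays in the line arrangement case.

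Concretely, let $C \subset \mathbb{P}^{2}_{\mathbb{C}}$ be a reduced curve with ${\rm Sing}(C) = \{P_{1}, \ldots, P_{s}\}$, let $\pi\colon X \to \mathbb{P}^{2}_{\mathbb{C}}$ be the blow-up along ${\rm Sing}(C)$ with exceptional divisors $E_{i}$, and let $D$ be any reduced test curve of degree $d$ with $n_{i} := {\rm mult}_{P_{i}}(D)$. Unwinding the definition of $H(\mathbb{P}^{2}_{\mathbb{C}};{\rm Sing}(C))$ reduces the conjecture to showing
$$d^{2} - \sum_{i=1}^{s} n_{i}^{2} \geq -4s$$
for every such $D$. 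As a preliminary reduction, one separates the components of $D$ meeting ${\rm Sing}(C)$ transversally (these contribute in a controlled way to both sides) and reduces the question to test curves $D$ whose singular locus effectively \emph{dominates} the configuration $\mathcal{P}$.

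For the main step I would apply a log BMY-type inequality to the pair $(X, \tilde{D} + \sum_{i} \alpha_{i} E_{i})$, where $\tilde{D}$ is the strict transform of $D$ and $\alpha_{i} \in (0,1] \cap \mathbb{Q}$ are coefficients chosen so that the pair is log canonical. Expanding $(K_{X} + \tilde{D} + \sum_{i} \alpha_{i} E_{i})^{2}$ together with the associated logarithmic Euler number in terms of $d$, the $n_{i}$, and the local resolution data of the singularities of $D$ at the points $P_{i}$, and using that higher singularities only contribute nonnegatively to Langer's inequality, one hopes to obtain the required bound after rearranging. In the line arrangement case with $\alpha_{i} = 1$ this specialises exactly to Hirzebruch's inequality and explains why the constant $-4$ is the natural answer.

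The main obstacle, and the reason the statement is given only as a conjecture, is twofold. First, the infimum defining $H(\mathbb{P}^{2}_{\mathbb{C}};{\rm Sing}(C))$ ranges over \emph{all} reduced curves $D$, not just $C$ itself, so one has no a priori control over the analytic singularity types that $D$ may exhibit at the $P_{i}$; arbitrarily complicated non-ordinary singularities can appear. Second, making the pair above log canonical typically requires either a further embedded resolution of $D$ (introducing many new exceptional divisors whose log Chern contributions must be bounded) or the choice of coefficients $\alpha_{i} < 1$ that weaken the resulting estimate. Controlling both sources of error uniformly in the singularity type is precisely what prevents the strategy from going through unconditionally, and partial results restricted to curves with only ordinary singularities seem the natural first target.
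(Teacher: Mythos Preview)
The statement in question is presented in the paper as a \emph{conjecture}, not as a theorem; the paper offers no proof and none is expected. Your proposal is therefore not being compared against an existing argument---there is nothing in the paper to compare it to.

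That said, your text is not a proof either, and you are transparent about this: you outline a log Bogomolov--Miyaoka--Yau strategy generalising Hirzebruch's inequality and then correctly identify the two obstructions (arbitrary test curves $D$ with uncontrolled singularity types at the $P_i$, and the loss incurred when forcing log canonicity). This is a reasonable heuristic discussion of why the conjecture is hard, and the analogy with the line case is apt, but it should not be labelled a ``proof proposal'' since you yourself conclude that the strategy does not go through unconditionally. If anything, what you have written is closer to a remark motivating the conjecture than to an attempted proof.
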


Now let us come back to a short discussion on linear Harbourne constants. Over the complex numbers we do not know whether there exists a line configuration such that $H_{L} \approx -4$. The most negative example is delivered by Wiman's configuration of lines $W_{45}$ \cite{Wiman}. This configuration consists of $45$ lines and has exactly $120$ triple points, $45$ quadruple points, and $36$ quintuple points. Of course $W_{45}$ cannot be realized over the real numbers due to the (dual) Sylvester-Gallai theorem which tells us that every real essential configuration of lines contains at least one double point. Easy computations show that
$$H_{L}(\mathbb{P}^{2}; {\rm Sing}(W_{45})) \approx -3.36.$$
The real case seems to be a little bit more interesting since the bound from Theorem \ref{th:real} is asymptotically achievable (i.e., when the number of lines in configurations $k$ tends to infinity) and, which is also very remarkable, these configurations are strictly related to the orchard problem. 

In this paper we will consider three types of real configurations of lines, namely B\"or\"oczky line configurations, polyhedral (simplicial) line configurations, $s$-elliptic configurations, which have asymptotically the same value of $H_{L}$ and they have the same asymptotical Chern slopes (defined in Section 3), which also sheds some light on new relations between combinatorics of extremal line configurations, the bounded negativity problem and the geography problem of surfaces of general type. These numerical relations are formulated in \emph{Theorem 2.12}. In addition, we show that these three real line configurations share the same asymptotic Chern slopes (and the same value of $H_{L}$) as the so-called Fermat line configurations, which do not have double points and thus they cannot be realized over the real numbers. Also we discuss some properties of the so-called Kummer covers, especially in the context of ball-quotients. In particular, we show explicitly that almost all simplicial and B\"or\"oczky line arrangements do not provide ball-quotients via Hirzebruch's construction -- see \emph{Theorem 2.17}.

In the second part of the paper, we show, in the spirit of Hirzebruch's paper \cite{Hirzebruch}, that for transveral configrations of smooth curves in the complex projective plane their characteristic numbers (one of our Chern slopes defined in the note) are strictly less than $8/3$ -- see \emph{Theorem 3.6}. It is worth pointing out here that in the case of line configurations we have a sharp bound given by $8/3$ due to a result of Sommese \cite{AJS}.
\section{Line configurations and Chern slopes} 
\subsection{Extremal line configurations}
In this section we briefly recall previously mentioned line configurations. The first three configurations are defined over the real numbers. Moreover, B\"or\"oczky and $s$-elliptic line configurations deliver the maximal possible number of triple points for arrangements defined over the real numbers \cite[Theorem 1.3]{GT}
\begin{example}[B\"or\"oczky line configurations]
This construction can be found in \cite{FP84}. We start with a regular $k$-gon inscribed in a circle $O$. We denote vertices of the this $k$-gon by $P_{0}, ..., P_{k-1}$. For simplicity we assume in this note that $k$ is even (of course one can handle with $k$ odd, but it is a little bit more involving). Then we construct the first line by joining $P_{0}$ with $P_{k/2}$. In the next step we join $P_{k/2 - 2}$ with $P_{1}$ and so forth. Of course, it may happen that $P_{k/2 - 2i}$ and $P_{i}$ coincide -- then we draw the tangent line at $P_{i}$ to $O$. We obtain the configuration $\mathcal{B}_{k}$ which consists of $k$ lines, $k-3 +\varepsilon$ double points and $1+ \lfloor \frac{k(k-3)}{6} \rfloor$ triple points, where
$\varepsilon$ is equal to $0$ if $k \equiv 0 \, {\rm mod}(3)$, or $2$ if $k \equiv 1 \, {\rm mod}(3)$ or $k \equiv 2 \, {\rm mod}(3)$
\end{example}
\begin{example}[$s$-elliptic configurations]
\label{s-elliptic}
This example is taken from \cite[p.~120]{Hirzebruch}.
Let $D$ be a smooth plane cubic curve. Pick a flex to define the group law on the points of $D$.
Let $\mathcal{L}$ be the lines dual to the points of a finite group $U$ of order $k$ and let $w$ be the number of
lines in $\mathcal{L}$ whose duals are flex points. Let $\mathcal{E}_{k}$ be the union of the $k$ lines.
Then $t_2=k-w$ and $t_3=\frac{k(k-3)}{6}+\frac{w}{3}$.
\end{example}
\begin{example}[Polyhedral line configurations]
Let $\mathcal{P}_{k}$ be a configuration of $2k$ lines where $k$ of the lines are the sides of a regular $k$-gon
and the other $k$ lines are the lines of bilateral symmetry of the $k$-gon (i.e., angle bisectors and
perpendicular bisectors of the sides). Then it is not hard to check that
$t_{k}=1$ (this is the center of the $k$-gon), $t_{2}=k$ (these are the midpoints of the sides)
$t_{3} = \binom{k}{2}$ (these are the intersections of pairs of sides with the line of symmetry between the sides of the pair)
and $t_{r}=0$ for $r>3$.
\end{example}

\begin{example}[Fermat line configurations]
This example is taken from \cite[Example~II.6.]{GU}
Consider three non-collinear points of the projective space. Then we can construct a configuration $\mathcal{F}_{k}$ of $3k$ lines with $k\geq 3$ (each $k$ lines pass through each point) in such a way that these $3k$ lines intersect exactly in $k^{2}$ triple points. For $k=3$ this is exactly the well-known dual Hesse configuration consisting of $9$ lines and $12$ triple points. In general, we have $t_{k} = 3$, $t_{3}=k^{2}$. Since every $\mathcal{F}_{k}$ does not contain any double point thus it cannot be realized over the real numbers.
\end{example}
\subsection{Asymptotic Chern slopes}
In the celebrated paper \cite{Hirzebruch} Hirzebruch studied the geography problem of surfaces of general type in the context of the Bogomolov-Miyaoka-Yau inequality \cite{M84} for the so-called Kummer extensions.
\begin{theorem}
Let $X$ be a smooth projective surface of general type (i.e. canonical divisor $K_{X}$ is ample) defined over complex numbers. Denote by $c_{1}(X)$ and $c_{2}(X)$ the first and the second Chern class of the complex tangent bundle. Then one has
$$c_{1}^{2}(X) \leq 3c_{2}(X).$$
Moreover, if $c_{1}^{2}(X) = 3c_{2}(X)$, then $X$ is a ball quotient, i.e., the universal cover is the unit complex ball.
\end{theorem}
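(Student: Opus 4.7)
The plan is to establish the inequality by the analytic method pioneered by Yau. Since $K_X$ is ample by hypothesis, the first Chern class $c_1(X) \in H^2(X,\mathbb{R})$ is represented by a negative $(1,1)$-form. By the Aubin--Yau theorem (the resolution of the Calabi conjecture in the negative-curvature case) there exists a unique K\"ahler--Einstein metric $\omega$ on $X$ whose Ricci form satisfies $\mathrm{Ric}(\omega) = -\omega$.

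With this metric fixed, I would compute the Chern numbers via Chern--Weil theory. Writing $R$ for the curvature tensor of the Chern connection on $T_X$, the forms representing $c_1^2(X)$ and $c_2(X)$ are invariant quadratic polynomials in $R$, and integration over $X$ returns the corresponding Chern numbers. The K\"ahler--Einstein condition fixes the trace part of $R$ in terms of $\omega$, so that the linear combination $3c_2(X) - c_1^2(X)$ becomes the integral over $X$ of a quadratic expression in the traceless part of the curvature tensor.

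The crucial step is a pointwise algebraic inequality of Chen--Ogiue/Guggenheimer type: on any K\"ahler--Einstein complex surface, the integrand obtained above is pointwise nonnegative. Integrating gives the global inequality $c_1^2(X) \leq 3c_2(X)$. For the rigidity statement, equality forces the pointwise integrand to vanish everywhere, which is equivalent to the statement that the holomorphic sectional curvature of $\omega$ is constant and strictly negative. A K\"ahler surface with constant negative holomorphic sectional curvature has, by the uniformization theorem for complex space forms, universal cover biholomorphic to the unit ball $\mathbb{B}^2 \subset \mathbb{C}^2$ equipped with its Bergman metric, so $X$ is a ball quotient as claimed.

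The principal obstacle will be verifying the pointwise curvature inequality itself, which is a nontrivial tensorial estimate on the Riemann tensor and is not automatic from the Einstein condition alone; once Yau's theorem supplies the K\"ahler--Einstein metric, the remaining arguments are routine Chern--Weil and linear-algebra manipulations. An alternative, purely algebraic route proceeds through Miyaoka's proof via the semistability of $\Omega^1_X$ (implied by ampleness of $K_X$) together with the Bogomolov inequality for semistable rank-two vector bundles, but the analytic approach outlined above yields the rigidity half of the statement most transparently.
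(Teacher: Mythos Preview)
Your outline is a correct sketch of the Aubin--Yau analytic proof of the Bogomolov--Miyaoka--Yau inequality together with Yau's rigidity argument for the equality case, and the alternative via Miyaoka's semistability argument that you mention is also valid. However, there is nothing to compare against: the paper does \emph{not} prove this theorem. It is quoted as background (with a reference to Miyaoka \cite{M84}) in order to motivate Hirzebruch's Kummer-cover construction, and no proof or sketch is given anywhere in the text. So your proposal is not wrong, but it supplies an argument the paper never attempts; the authors simply take the result as known.
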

\begin{remark}
The Bogomolov-Miyaoka-Yau inequality is not true in general for fields of positive characteristics. Counterexamples can be found, for instance, in \cite[Page~142]{BHH87},\cite{Easton2008}, \cite{Szpiro}. Recently Langer has shown that the Bogomolov-Miyaoka-Yau inequality holds for smooth projective surfaces with non-negative Kodaira dimension defined over fields $k$ with the additional assumption $X$ can be lifted to the second Witt vectors $W_{2}(k)$ - we refer to his beautiful paper \cite{Langer} for further details.
\end{remark}
The idea of Hirzebruch was to study surfaces of general type which can be constructed using the Kummer extension. We briefly recall his construction.

Let $\mathcal{L} = \{l_{1}, ..., l_{k}\} \subset \mathbb{P}_{\mathbb{C}}^{2}$ be a configuration of $k \geq 4$ lines such that $t_{k}=0$ and let $n\in\mathbb{Z}_{\geq 2}$.
Now we can consider the Kummer extension of degree $n^{k-1}$ and Galois group $(\mathbb{Z}/n\mathbb{Z})^{k-1}$ defined as the function field
$$\mathbb{C}(z_{1}/z_{0}, z_{2}/z_{0})((l_{2}/l_{1})^{1/n}, ...,(l_{k}/l_{1})^{1/n})$$
which is an abelian extension of the function field of $\mathbb{P}^{2}_{\mathbb{C}}$.
This function field determines an algebraic surface $X_{n}$ with normal singularities which ramifies over the plane with the configuration as the locus of the ramification. Hirzebruch showed that $X$ is singular exactly over a point $p$ iff $p$ is a point of multiplicity $m(p) \geq 3$ in $\mathcal{L}$. After blowing up these singular points we obtain a smooth surface $Y_{n}$. It turns out that $Y_{n}$ is of general type if $t_{k}=t_{k-1} = t_{k-2} = 0$ and $n \geq 2$ or $t_{k}=t_{k-1}=0$ and $n \geq 3$. In the sequel we will called $Y_{n} := Y_{n}^{\mathcal{L}}$ as \emph{the Kummer cover} of order $n^{k-1}$.

The natural arising question is whether it is possible to construct a ball quotient using Kummer extensions.
\begin{example}
\label{ex:ballquotient}
Let $n=5$ and consider the following real line configuration:
$$k = 6, t_{2} = 3, t_{3} = 4.$$
Then $Y_{5}$ is a ball quotient.
\end{example}
Recall that for a line configuration $\mathcal{L}$ and $i \in \{0,1,2\}$ one defines
$$f_{i} = \sum_{r\geq 2}r^{i}t_{r}.$$
Summing up the most important observations from Hirzebruch's paper \cite{Hirzebruch}, we get the following theorem. 
\begin{theorem}
Let $Y_{n}^{\mathcal{L}}$ be the Kummer cover of order $n^{k-1}$, then one has
$$\frac{c_{2}(Y_{n}^{\mathcal{L}})}{n^{k-3}} = n^{2}(3-2k+f_{1}-f_{0}) + 2n(k-f_{1}+f_{0}) + f_{1}-t_{2},$$
$$\frac{c_{1}^{2}(Y_{n}^{\mathcal{L}})}{n^{k-3}} = n^{2}(-5k+9+3f_{1}-4f_{0}) + 4n(k-f_{1}+f_{0}) +f_{1}-f_{0}+k+t_{2}.$$
\end{theorem}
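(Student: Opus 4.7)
The plan is to follow Hirzebruch's original construction and derive the Chern invariants of $Y_n^{\mathcal{L}}$ via (i) an Euler-characteristic stratification for $c_2$ and (ii) Riemann--Hurwitz applied to the abelian cover for $c_1^2$, with local contributions at the singular points of $\mathcal{L}$ computed by the explicit resolution data of the cover.

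First I would fix the setup. Let $X_n \to \mathbb{P}^2_{\mathbb{C}}$ be the Galois cover of degree $n^{k-1}$ with group $(\mathbb{Z}/n)^{k-1}$ defined by the Kummer extension. As Hirzebruch shows, $X_n$ is smooth away from the preimages of the points of multiplicity $r\ge 3$, and at each such point the analytic singularity is an abelian quotient whose minimal resolution is known explicitly (a tree of rational curves whose combinatorics depends only on the pair $(n,r)$). Denote by $\sigma\colon Y_n^{\mathcal{L}}\to X_n$ the resolution and set $\pi=f\circ\sigma$.

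For $c_2(Y_n^{\mathcal{L}})=e(Y_n^{\mathcal{L}})$ I would stratify $\mathbb{P}^2$ as
\begin{equation*}
\mathbb{P}^2 = (\mathbb{P}^2\setminus\mathcal{L})\;\sqcup\;\bigsqcup_{i=1}^{k}\bigl(l_i\setminus\mathrm{Sing}(\mathcal{L})\bigr)\;\sqcup\;\mathrm{Sing}(\mathcal{L}),
\end{equation*}
and pull the stratification up to $Y_n^{\mathcal{L}}$. Over the open complement $\pi$ is étale of degree $n^{k-1}$; over each punctured line it is étale of degree $n^{k-2}$; over each $r$-fold point $p$ the fibre $\pi^{-1}(p)$ is the exceptional tree of $\sigma$ together with the (smooth) preimage of $p$ in $X_n$. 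The first two contributions give
\begin{equation*}
n^{k-1}(3-2k+f_1-f_0)+n^{k-2}(2k-f_1),
\end{equation*}
using $e(\mathbb{P}^2\setminus\mathcal{L})=3-2k+f_1-f_0$ and $\sum_i e(l_i^\circ)=2k-f_1$. The third contribution is a sum $\sum_r t_r\,\varepsilon(n,r)$, where $\varepsilon(n,r)$ depends only on the pair $(n,r)$ and is evaluated from the explicit resolution (it reduces to counting nodes and components of the exceptional configuration). One then checks that dividing by $n^{k-3}$ and collecting terms in powers of $n$ reproduces the stated expression for $c_2(Y_n^{\mathcal{L}})/n^{k-3}$.

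For $c_1^2(Y_n^{\mathcal{L}})=K_{Y_n^{\mathcal{L}}}^2$ I would use Riemann--Hurwitz for the cover $X_n\to\mathbb{P}^2$, writing
\begin{equation*}
K_{X_n} = f^*\!K_{\mathbb{P}^2} + \sum_{i=1}^{k}(n-1)\,f^{-1}(l_i)_{\mathrm{red}},
\end{equation*}
squaring, and pushing forward (so that $K_{X_n}^2=n^{k-1}(K_{\mathbb{P}^2}+(n-1)/n\sum l_i)^2$ on the regular locus). Then I would correct by the discrepancies $\sum_E a_E E$ introduced by $\sigma$; the coefficients $a_E$ are again determined locally at each $r$-fold point. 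After expansion, the global contribution organises itself as $n^{k-1}$ times a combination of the intersection numbers $l_i\cdot l_j$, $l_i^2$, minus an explicit sum of local terms $\delta(n,r)\,t_r$. Comparing with the claimed formula identifies exactly which linear combinations of $1,n,n^2$ appear and with which coefficients in $k,f_0,f_1,t_2$.

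The main obstacle is the careful bookkeeping of the local terms $\varepsilon(n,r)$ and $\delta(n,r)$ at each $r$-fold point: one has to compute the Euler number and the self-intersection of the exceptional configuration resolving a single singularity of $X_n$, and verify that after summation over $p\in\mathrm{Sing}(\mathcal{L})$ the dependence on $r$ telescopes into the coefficients of $f_0$, $f_1$ and $t_2$ displayed in the theorem. Once the local calculation is done and collected, both formulas follow by simply matching powers of $n$.
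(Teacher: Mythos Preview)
The paper does not supply a proof of this theorem: it is stated as a summary of Hirzebruch's computations in \cite{Hirzebruch} (``Summing up the most important observations from Hirzebruch's paper \cite{Hirzebruch}, we get the following theorem''). So there is no argument in the paper to compare your proposal against.

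That said, your outline is precisely Hirzebruch's method: additivity of the topological Euler characteristic over the stratification of $\mathbb{P}^2$ for $c_2$, and Riemann--Hurwitz for the abelian cover together with discrepancy corrections for $c_1^2$. The global pieces you write down, $n^{k-1}(3-2k+f_1-f_0)$ and $n^{k-2}(2k-f_1)$, are correct. The part you yourself flag as the obstacle---the local invariants $\varepsilon(n,r)$ and $\delta(n,r)$ at an $r$-fold point---is exactly where the substance lies, and you have not actually carried it out; as written the proposal is a plan rather than a proof. In Hirzebruch's original treatment this step is made tractable by first blowing up $\mathbb{P}^2$ at every point of multiplicity $r\ge 3$, so that the total transform of $\mathcal{L}$ becomes a simple normal crossing divisor; the abelian cover of this blow-up is then already smooth and coincides with $Y_n^{\mathcal{L}}$. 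In that model the fibre over a former $r$-fold point is a curve covering the exceptional $E_p\cong\mathbb{P}^1$ with prescribed ramification at the $r$ marked points, and both $\varepsilon(n,r)$ and $\delta(n,r)$ fall out of Riemann--Hurwitz on that curve together with its self-intersection in $Y_n^{\mathcal{L}}$. Completing this local computation (and treating $r=2$ separately, where $X_n$ is already smooth and the fibre consists of $n^{k-3}$ reduced points) would turn your sketch into a full proof identical in substance to Hirzebruch's.
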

\begin{definition}
Let $X$ be a smooth minimal complex projective surfaces of general type. The number
$$\frac{c_{1}^{2}(X)}{c_{2}(X)} \in [1/5, 3]$$
is called \emph{the Chern slope of $X$}.
\end{definition}
We refer to \cite{RU15} for very recent results devoted to Chern slopes.

If $\mathcal{L}_{k}$ is a sequence of line configurations (like in the case of B\"or\"oczky line configuration etc.) it is natural to ask how the Chern slope for the Kummer cover $Y_{n}^{\mathcal{L}_{k}}$ behaves asymptotically with $n; k \rightarrow \infty$.
\begin{definition}
Let $Y_{n}^{\mathcal{L}_{k}}$ be the Kummer cover of order $n^{k-1}$ associated to a sequence of line configurations $\mathcal{L}_{k}$.
Then
\begin{itemize}
\item the $k$-Chern slope is defined as
$${\rm Ch}_{k}(Y_{n}^{\mathcal{L}_{k}}) = \lim_{k\rightarrow \infty} \frac{ c_{1}^{2}(Y_{n}^{\mathcal{L}_{k}})}{c_{2}(Y_{n}^{\mathcal{L}_{k}})} \in \mathbb{C}[n].$$
\item the $n$-Chern slope (or the characteristic polynomial of $\mathcal{L}_{k}$) is defined as
$${\rm Ch}_{n}(Y_{n}^{\mathcal{L}_{k}}) = \lim_{n\rightarrow \infty} \frac{ c_{1}^{2}(Y_{n}^{\mathcal{L}_{k}})}{c_{2}(Y_{n}^{\mathcal{L}_{k}})} \in \mathbb{C}[k].$$
\item the $(k,n)$-Chern slope is defined as
$${\rm Ch}_{(k,n)}(Y_{n}^{\mathcal{L}_{k}}) =\lim_{n, k\rightarrow \infty} \frac{ c_{1}^{2}(Y_{n}^{\mathcal{L}_{k}})}{c_{2}(Y_{n}^{\mathcal{L}_{k}})} \in \mathbb{R}.$$
\end{itemize}
\end{definition}
\begin{definition}
Let $\mathcal{L}_{k} \subset \mathbb{P}^{2}$ be a sequence of line configurations of $k$ lines. Then the asymptotic linear Harbourne constant at ${\rm Sing}(\mathcal{L}_{k})$ is defined as
$$\widehat{H_{L}(\mathcal{L}_{k})} = \lim_{k \rightarrow \infty} H_{L}(\mathbb{P}^{2} ; {\rm Sing}(\mathcal{L}_{k})).$$
\end{definition}
Now we are ready to formulate our first result.
\begin{theorem}
\label{thm:geometric}
Under the notions as above, we have the following equalities:
\begin{enumerate}
\item $\widehat{H_{L}(\mathcal{B}_{k})} = \widehat{H_{L}(\mathcal{E}_{k})} = \widehat{H_{L}(\mathcal{P}_{k})} = \widehat{H_{L}(\mathcal{F}_{k})} = -3$,

\item ${\rm Ch}_{k}(Y_{n}^{\mathcal{B}_{k}}) = {\rm Ch}_{k}(Y_{n}^{\mathcal{E}_{k}}) = {\rm Ch}_{k}(Y_{n}^{\mathcal{P}_{k}}) = {\rm Ch}_{k}(Y_{n}^{\mathcal{F}_{k}}) = \frac{\frac{5}{2}n^{2} - 4n + 1}{n^{2}-2n+\frac{3}{2}},$
\item ${\rm Ch}_{(k,n)}(Y_{n}^{\mathcal{B}_{k}}) = {\rm Ch}_{(k,n)}(Y_{n}^{\mathcal{E}_{k}}) = {\rm Ch}_{(k,n)}(Y_{n}^{\mathcal{P}_{k}}) = {\rm Ch}_{(k,n)}(Y_{n}^{\mathcal{F}_{k}}) = \frac{5}{2}.$
\end{enumerate}
\end{theorem}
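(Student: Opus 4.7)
The plan is a direct case-by-case computation resting on the two explicit formulas already at our disposal. First, specialising \eqnref{eq:Harborne} to $C=\mathcal{L}$ and using $\widetilde{\mathcal{L}}^2 = K^2 - \sum_r r^2 t_r$ for the strict transform after blowing up ${\rm Sing}(\mathcal{L})$, one obtains for a configuration of $K$ lines
\[
H_L(\mathbb{P}^2;{\rm Sing}(\mathcal{L})) \;=\; \frac{K^2 - f_2}{f_0}.
\]
Second, the Chern class formulas in the theorem preceding Definition 2.9 write $c_1^2(Y_n^{\mathcal{L}})/n^{K-3}$ and $c_2(Y_n^{\mathcal{L}})/n^{K-3}$ as polynomials in $n$ whose coefficients are explicit expressions in $K$, $f_0$, $f_1$ and $t_2$. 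The $t_r$ are tabulated in Examples 2.1--2.4, so the whole proof reduces to extracting the leading $k^2$-term of $f_0$, $f_1$, $f_2$, $t_2$ for each family (in the parameter $k$ appearing in its definition) and substituting.

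For part (1), I would check the four families separately: for $\mathcal{B}_k$ and $\mathcal{E}_k$ one has $K=k$, $f_0\sim k^2/6$, $f_2\sim 3k^2/2$, giving $H_L\to -3$; for $\mathcal{P}_k$ one has $K=2k$, $f_0\sim k^2/2$, $f_2\sim 11k^2/2$ (the $t_k=1$ point contributes $k^2$ to $f_2$), giving $(4k^2-11k^2/2)/(k^2/2)\to -3$; and for $\mathcal{F}_k$ one has $K=3k$, $f_0\sim k^2$, $f_2\sim 12k^2$ (with $3k^2$ coming from the three $k$-fold points), again $-3$. For part (2) I perform the analogous substitution into the Chern formulas, keeping only leading $k^2$ contributions in each bracket. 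For $\mathcal{B}_k$, for example, this gives $c_2/n^{k-3}\sim k^2(\tfrac13 n^2-\tfrac23 n+\tfrac12)$ and $c_1^2/n^{k-3}\sim k^2(\tfrac56 n^2-\tfrac43 n+\tfrac13)$, so the common $k^2$ cancels in the ratio and one reads off $(\tfrac52 n^2-4n+1)/(n^2-2n+\tfrac32)$. The same reduction carried out for the other three families produces the same rational function, because in $\mathcal{P}_k$ and $\mathcal{F}_k$ the $t_k$-singularity contributes only at sub-leading order to $f_0$ and $f_1$ (since $k\cdot t_k=O(k)$) and hence does not affect the leading $k^2$ ratio. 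Part (3) is immediate from part (2): both numerator and denominator are quadratic in $n$ with leading coefficients $\tfrac52$ and $1$, so the limit as $n\to\infty$ equals $\tfrac52$.

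There is no conceptual obstacle here; the real content of the theorem is the bookkeeping coincidence that four families with rather different combinatorial profiles (Fermat has no double points, polyhedral has a single $k$-fold point, B\"or\"oczky and $s$-elliptic have a linear number of double points, and the degrees scale as $k,k,2k,3k$ respectively) nevertheless yield the same leading $k^2$ coefficients in each of the five expressions $3-2K+f_1-f_0$, $K-f_1+f_0$, $f_1-t_2$, $-5K+9+3f_1-4f_0$, $f_1-f_0+K+t_2$ up to a common scalar that depends on the family. The main risk is simply arithmetic: in $\mathcal{P}_k$ and $\mathcal{F}_k$ one must remember that $K^2$ is itself of order $k^2$ (namely $4k^2$ or $9k^2$), and in $f_2$ one must retain the $k^2\cdot t_k$ contribution, which is of leading order even though $t_k$ itself is bounded.
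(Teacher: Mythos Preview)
Your proposal is correct and follows essentially the same route as the paper: both proofs substitute the combinatorial data of each family into the Harbourne-constant formula $(K^2-f_2)/f_0$ and into the Chern-number polynomials of Theorem~2.8, then pass to the limit in $k$. The only difference is cosmetic---the paper records the full explicit polynomials $c_1^2(Y_n^{\mathcal{L}_k})/n^{K-3}$ and $c_2(Y_n^{\mathcal{L}_k})/n^{K-3}$ for each family before taking $k\to\infty$, whereas you extract only the leading $k^2$ coefficients, which is all the limit sees; your closing remark about the ``common scalar depending on the family'' is exactly the mechanism the paper isolates afterwards in Corollary~2.14.
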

\begin{proof}
$\text{Ad } 1.)$ Observe that the local linear Harbourne constant of $\mathcal{L}_{k}$ at ${\rm Sing}(\mathcal{L}_{k})$ has the form
$$H_{L}(\mathbb{P}^{2}; {\rm Sing}(\mathcal{L}_{k})) = {\rm inf}_{ \mathcal{L}_{k}} \frac{ k^{2}  - f_{2}}{f_{0}} = {\rm inf}_{ \mathcal{L}_{k}} \frac{k-f_{1}}{f_{0}},$$
where the last equality is the consequence of the well-know combinatorial equality $k^{2} - k = f_{2} - f_{1}$.
Then for $\mathcal{F}_{k}$ one has
$$\widehat{H_{L}(\mathcal{F}_{k})} = \lim_{k \rightarrow \infty} \frac{-3k^{2}}{k^{2}+3} = -3.$$
For other configurations computations are similar. \\\\
$\text{Ad } 2.)$
Quite tedious computations lead to the following equalities:

$$c_{2}(Y_{n}^{\mathcal{B}_{k}})/n^{k-3}= n^{2}\bigg(2+\varepsilon -k + 2 \left\lfloor\frac{k(k-3)}{6} \right\rfloor\bigg) + 2n\bigg(1-\varepsilon-2 \left\lfloor\frac{k(k-3)}{6}\right\rfloor\bigg)+k+ \varepsilon + 3 \left \lfloor\frac{k(k-3)}{6}\right\rfloor,$$

\begin{multline*}
c_{1}^{2}(Y_{n}^{\mathcal{B}_{k}})/n^{k-3} = n^{2}\bigg(-3k+8 + 2\varepsilon +5 \left\lfloor\frac{k(k-3)}{6}\right\rfloor\bigg)+4n\bigg(1-\varepsilon-2\left\lfloor\frac{k(k-3)}{6}\right\rfloor\bigg) + \\ + 3k-4+2\varepsilon + 2\left\lfloor\frac{k(k-3)}{6}\right\rfloor,
\end{multline*}
$$c_{2}(Y_{n}^{\mathcal{E}_{k}})/n^{k-3} = n^{2}\bigg(3-k-\frac{w}{3} + \frac{2k(k-3)}{6} \bigg) + 2n\bigg(\frac{w}{3}-\frac{2k(k-3)}{6}\bigg) + k+\frac{k(k-3)}{2},$$
$$c_{1}^{2}(Y_{n}^{\mathcal{E}_{k}})/n^{k-3} = n^{2} \bigg( -3k-\frac{w}{3}+9 + \frac{5k(k-3)}{6} \bigg) + 4n\bigg(\frac{w}{3}-\frac{2k(k-3)}{6} \bigg) + 3k - \frac{4w}{3} + \frac{k(k-3)}{3}, $$
$$c_{2}(Y_{n}^{\mathcal{P}_{k}})/n^{k-3} = n^{2} (2-3k+k^{2}) + 2n(1-k^{2}) + \frac{3}{2}k^{2} + \frac{1}{2}k,$$
$$c_{1}^{2}(Y_{n}^{\mathcal{P}_{k}})/n^{k-3} = n^{2} \bigg(5+\frac{5}{2}(k^{2}-3k)\bigg) + 4n(1-k^{2}) + k^{2} +3k -1,$$
$$c_{2}(Y_{n}^{\mathcal{F}_{k}})/n^{k-3} = n^{2} (-3k+2k^{2} )+2n(3-2k^{2}) + 3k^{2}+3k,$$
$$c_{1}^{2}(Y_{n}^{\mathcal{F}_{k}})/n^{k-3} = n^{2} (5k^{2}-6k-3) + 4n(3-2k^{2}) +2k^{2}+4k-3.$$
Then taking limits with $k\rightarrow \infty$ one gets the common polynomial $$\frac{\frac{5}{2}n^{2}-4n+1}{n^{2}-2n+\frac{3}{2}}.$$\\
$\text{Ad } 3.)$ This is a consequence of $\text{Ad } 2).$ Observe that for our extremal line configurations we have
$$\lim_{k \rightarrow \infty} \lim_{n \rightarrow \infty}  {\rm Ch}(Y_{n}^{\mathcal{L}_{k}}) = \lim_{n \rightarrow \infty} \lim_{k \rightarrow \infty} {\rm Ch}(Y_{n}^{\mathcal{L}_{k}}).$$
\end{proof}
\begin{remark}
In \cite{Hirzebruch} Hirzebruch introduced for a line configuration $\mathcal{L}$ its characteristic number $\gamma$, which is defined as
$$\gamma = \lim_{n \rightarrow \infty} \frac{c_{1}^{2}(Y_{n}^{\mathcal{L}})}{c_{2}(Y_{n}^{\mathcal{L}})},$$
and he observed that
$$\gamma = \frac{5}{2} - \frac{3f_{0}-f_{1}-3}{2(3-2k+f_{1}-f_{0})}$$
if only $3-2k+f_{1}-f_{0}>0$.

In \cite{AJS} Sommese has shown that for a line configuration $\mathcal{L}$ of $k \geq 6$ lines one has the following inequalities
$$ 2 \bigg( \frac{k-3}{k-2} \bigg) \leq \gamma \leq \frac{8}{3},$$
where the left-hand side inequality becomes equality for general line configurations (possessing only double points as singularities) and the right-hand side inequality becomes  equality for the dual Hesse-configuration.
\end{remark}
Theorem \ref{thm:geometric} presents a certain phenomenon, which can be further generalized.
\begin{corollary}
Suppose that  $\mathcal{L}_{k} \subset \mathbb{P}^{2}$ is a sequence of line configurations such that
$$t_{3} = ck^{2} + \mathcal{O}(k) \text{ with } c>0, $$
$$t_{2}, t_{4}, ..., t_{k} \text{ have at most linear growth}.$$
Then $\widehat{H_{L}(\mathcal{L}_{k})} = -3$, ${\rm Ch}_{k}(Y_{n}^{\mathcal{L}_{k}}) = \frac{\frac{5}{2}n^{2}-4n+1}{n^{2}-2n+\frac{3}{2}}$ and ${\rm Ch}_{(n,k)}(Y_{n}^{\mathcal{L}_{k}}) = \frac{5}{2}.$
\end{corollary}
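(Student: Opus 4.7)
The plan is to reduce everything to the asymptotic size of $f_0$ and $f_1$, which under the hypotheses are both of order $k^2$ with leading coefficients differing by exactly a factor of three, and then plug these into the formulas already obtained for the Harbourne constant and for the Chern numbers in Theorem 2.10. The key observation, which makes all four examples behave the same way, is that after dividing by $k^2$ the constant $c$ will cancel out of the ratio $c_1^2/c_2$.

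First I compute the asymptotics of the combinatorial invariants. Since $t_2, t_4, \ldots, t_k$ grow at most linearly and $t_3 = ck^2 + \mathcal{O}(k)$, we get
\begin{equation*}
f_0 = \sum_{r\geq 2} t_r = ck^2 + \mathcal{O}(k), \qquad f_1 = \sum_{r\geq 2} r\, t_r = 3ck^2 + \mathcal{O}(k),
\end{equation*}
and consequently $f_1 - f_0 = 2ck^2 + \mathcal{O}(k)$. The identity $k^2 - k = f_2 - f_1$ is not needed beyond the formula for $H_L$ recorded in the proof of Theorem \ref{thm:geometric}, which reads $H_L(\mathbb{P}^2;\operatorname{Sing}(\mathcal{L}_k)) = (k - f_1)/f_0$. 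Plugging in the asymptotics gives $(k - 3ck^2 + \mathcal{O}(k))/(ck^2 + \mathcal{O}(k)) \to -3$, so $\widehat{H_L(\mathcal{L}_k)} = -3$.

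For the $k$-Chern slope I substitute the asymptotics into the formulas of Theorem 2.10. The coefficients appearing there combine to
\begin{equation*}
3 - 2k + f_1 - f_0 = 2ck^2 + \mathcal{O}(k), \qquad k - f_1 + f_0 = -2ck^2 + \mathcal{O}(k),
\end{equation*}
\begin{equation*}
-5k + 9 + 3f_1 - 4f_0 = 5ck^2 + \mathcal{O}(k), \qquad f_1 - t_2 = 3ck^2 + \mathcal{O}(k),
\end{equation*}
and $f_1 - f_0 + k + t_2 = 2ck^2 + \mathcal{O}(k)$. Therefore
\begin{equation*}
\frac{c_2(Y_n^{\mathcal{L}_k})}{n^{k-3} k^2} \longrightarrow c\bigl(2n^2 - 4n + 3\bigr), \qquad \frac{c_1^2(Y_n^{\mathcal{L}_k})}{n^{k-3} k^2} \longrightarrow c\bigl(5n^2 - 8n + 2\bigr)
\end{equation*}
as $k \to \infty$. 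The factor $c$ cancels in the ratio, giving
\begin{equation*}
{\rm Ch}_k(Y_n^{\mathcal{L}_k}) = \frac{5n^2 - 8n + 2}{2n^2 - 4n + 3} = \frac{\tfrac{5}{2}n^2 - 4n + 1}{n^2 - 2n + \tfrac{3}{2}},
\end{equation*}
which is exactly the polynomial obtained in Theorem \ref{thm:geometric}(2). Finally, taking $n \to \infty$ in this expression immediately yields ${\rm Ch}_{(k,n)}(Y_n^{\mathcal{L}_k}) = 5/2$, and the commutation of the iterated limits is the same argument as in part (3) of the proof of Theorem \ref{thm:geometric}.

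No step is a genuine obstacle: the statement is essentially a bookkeeping consequence of Theorem \ref{thm:geometric}, and the only point worth emphasising is that the hypothesis "$t_3$ dominates quadratically and all other $t_r$ are at most linear" is exactly what is needed to force $f_0 \sim ck^2$ and $f_1 \sim 3ck^2$ simultaneously, so that the common positive constant $c$ disappears in all three asymptotic quantities.
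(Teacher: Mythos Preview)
Your proof is correct and follows essentially the same approach as the paper: compute the asymptotics $f_0 \sim ck^2$, $f_1 \sim 3ck^2$, substitute into the formula $(k-f_1)/f_0$ for the Harbourne constant and into the Chern number formulas of Theorem~2.10, and observe that $c$ cancels in all ratios. Your write-up is in fact more detailed than the paper's, which simply records the asymptotic expressions $c_2/n^{k-3} \approx 2ck^2 n^2 - 4cnk^2 + 3ck^2$ and $c_1^2/n^{k-3} \approx 5ck^2 n^2 - 8cnk^2 + 2ck^2$ without spelling out the intermediate coefficient computations you give.
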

\begin{proof}
We have
$$\widehat{H_{L}(\mathcal{L}_{k})} = \lim_{k \rightarrow \infty}\frac{k - 3ck^{2} - \mathcal{O}(k)}{ck^2 + \mathcal{O}(k)} = -3.$$
In a similar way one sees that for $k >> 0$
$$c_{2}(Y_{n}^{\mathcal{L}_{k}})/n^{k-3} \approx 2ck^{2}n^{2} - 4cnk^{2} + 3ck^{2},$$
$$c_{1}^{2}(Y_{n}^{\mathcal{L}_{k}})/n^{k-3} \approx 5ck^{2}n^{2} - 8cnk^{2} + 2ck^{2}$$
which completes the proof.
\end{proof}
\begin{remark}
Erd\"os and Purdy have shown that if $\mathcal{L} \subset \mathbb{P}^{2}_{\mathbb{R}}$ is a configuration of at least $k \geq 3$ lines with $t_{k}=0$ and $t_{2} < k-1$, then there exists a positive constant $c$ such that $t_{3} \geq ck^{2}$.
\end{remark}
\begin{corollary}
Suppose that  $\mathcal{L}_{k} \subset \mathbb{P}^{2}$ is a sequence of line configurations such that
$$t_{4} = ck^{2} +\mathcal{O}(k) \text{ with } c>0$$
$$t_{2}, t_{3}, t_{5}, ..., t_{k} \text{ have at most linear growth}.$$
Then $\widehat{H_{L}(\mathcal{L}_{k})} = -4$, ${\rm Ch}_{k}(Y_{n}^{\mathcal{L}_{k}}) = \frac{\frac{8}{3}n^{2}-4n+1}{n^{2}-2n+\frac{4}{3}}$ and ${\rm Ch}_{(n,k)}(Y_{n}^{\mathcal{L}_{k}}) = \frac{8}{3}.$
\end{corollary}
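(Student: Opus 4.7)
The strategy is essentially parallel to the proof of the previous corollary, only replacing the ``$t_3$-dominant'' regime with a ``$t_4$-dominant'' one. The inputs are the formula $H_L(\mathbb{P}^2;{\rm Sing}(\mathcal{L}_k)) = (k-f_1)/f_0$ that appeared in the proof of Theorem~\ref{thm:geometric}, together with Hirzebruch's expressions for $c_1^2(Y_n^{\mathcal{L}_k})/n^{k-3}$ and $c_2(Y_n^{\mathcal{L}_k})/n^{k-3}$ recalled just before Definition~2.9.

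First I would unwind the hypothesis at the level of the combinatorial invariants $f_0 = \sum_{r\ge 2} t_r$ and $f_1 = \sum_{r\ge 2} r\, t_r$. Since every $t_r$ for $r\neq 4$ is at most linear in $k$, only $t_4$ contributes to the $k^2$-coefficient, so $f_0 = ck^2 + \mathcal{O}(k)$ and $f_1 = 4ck^2 + \mathcal{O}(k)$. Plugging these into $(k-f_1)/f_0$ yields $\widehat{H_L(\mathcal{L}_k)} = -4$, which handles the first assertion.

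For the second assertion I would read off the leading $k^2$-terms of each coefficient of $n^2$, $n^1$ and $n^0$ in the Hirzebruch formulas. The relevant combinations evaluate to: $f_1 - f_0 = 3ck^2 + \mathcal{O}(k)$, $3f_1 - 4f_0 = 8ck^2 + \mathcal{O}(k)$, $f_1 - t_2 = 4ck^2 + \mathcal{O}(k)$, and $f_1 - f_0 + t_2 = 3ck^2 + \mathcal{O}(k)$; the isolated $-2k$, $+9$, $-5k$, $+k$ terms are subdominant. This gives
\[
\frac{c_2(Y_n^{\mathcal{L}_k})}{n^{k-3}} = ck^2\bigl(3n^2 - 6n + 4\bigr) + \mathcal{O}(k), \qquad \frac{c_1^2(Y_n^{\mathcal{L}_k})}{n^{k-3}} = ck^2\bigl(8n^2 - 12n + 3\bigr) + \mathcal{O}(k),
\]
so that dividing and letting $k\to\infty$ gives ${\rm Ch}_k(Y_n^{\mathcal{L}_k}) = (8n^2-12n+3)/(3n^2-6n+4)$, which after dividing numerator and denominator by $3$ is exactly the claimed expression. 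The third assertion then follows by passing $n\to\infty$ in this rational function, yielding $\tfrac{8}{3}$; equivalently, as in part~(3) of Theorem~\ref{thm:geometric}, the iterated limits commute because the $k^2$-dominant terms do not depend on~$n$ through the subdominant data.

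There is no real obstacle here: the argument is a routine asymptotic expansion once the dominance of $t_4$ is used consistently. The only place where one has to be slightly attentive is to confirm that the terms linear in $k$ (coming from $t_2, t_3, t_5, \ldots$, and from the explicit $-2k, -5k, k$ in Hirzebruch's formulas) are indeed swallowed by the $\mathcal{O}(k)$ remainder and do not contaminate the $k^2$-coefficients that determine the limit. This parallels the book-keeping already performed in the previous corollary, so no new technique is required.
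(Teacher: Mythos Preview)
Your proposal is correct and follows exactly the approach implicit in the paper: the paper does not give a separate proof for this corollary, treating it as the evident $t_4$-analogue of the preceding corollary whose proof it does spell out. Your computation of the leading $k^2$-coefficients of $f_0,f_1$ and of the Hirzebruch polynomials, yielding $(8n^2-12n+3)/(3n^2-6n+4)$, is precisely the intended argument, and your normalization by $3$ matches the stated form.
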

At this moment we do not know whether there exists a sequence of line configurations in $\mathbb{P}^{2}_{\mathbb{C}}$ such that the number of quadruple points has growth $ck^{2} + \mathcal{O}(k)$ with $c>0$, but the above corollary shows that if such configuration exists, then ${\rm Ch}_{k}(Y_{3}^{\mathcal{L}_{k}}) = 3$, which suggests a very rigid structure of $Y_{3}^{\mathcal{L}_{k}}$. Observe that such a sequence of line configurations does not exist over the real numbers, since if we could construct such a sequence $\mathcal{L}_{k}'$, then for instance  $\widehat{H_{L}(\mathcal{L}_{k}')} = -4$, a contradiction with Theorem \ref{th:real}.
\subsection{Ball quotients and Kummer covers}
Let us come back to the configuration from Example \ref{ex:ballquotient}. It is worth pointing out that this configuration can be realized both as $\mathcal{P}_{3}$ and as $\mathcal{B}_{6}$, for simplicity let us denote it by $\mathcal{T}_{6}$. Now we calculate $c_{1}^{2}(Y_{n}^{\mathcal{T}_{6}})$ and $c_{2}(Y_{n}^{\mathcal{T}_{6}})$, namely
$$c_{2}(Y_{n}^{\mathcal{T}_{6}})/n^{3} = 2n^{2} -10n + 15,$$
$$c_{1}^{2}(Y_{n}^{\mathcal{T}_{6}})/n^{3} = 5n^{2} - 20n + 20.$$
Observe that for $n=5$ we obtain
$$\frac{c_{1}^{2}(Y_{n}^{\mathcal{T}_{6}})}{c_{2}(Y_{n}^{\mathcal{T}_{6}})} = 3,$$
for $n \geq 5$ the sequence of slopes is decreasing with
$$\lim_{n \rightarrow  \infty} \frac{c_{1}^{2}(Y_{n}^{\mathcal{T}_{6}})}{c_{2}(Y_{n}^{\mathcal{T}_{6}})} = \frac{5}{2}.$$
On the other hand, plugging data into Hirzebruch's inequality, we get
$$t_{2} + \frac{3}{4}t_{3} = 3 + \frac{3}{4}\cdot 4 \geq k + \sum_{r\geq 5}(r-4)t_{r} = 6.$$
Moreover, if we use the same data to the Melchior's inequality, then we get
$$t_{2} = 3 \geq 3 + \sum_{r\geq 4} (r-3)t_{r} = 3.$$

\begin{theorem}
For $k > 6$, $\ell \geq 4$ and an arbitrary $n \in \mathbb{Z}_{\geq 2}$ Kummer covers $Y_{n}^{\mathcal{B}_{k}}$ and $Y_{n}^{\mathcal{P}_{\ell}}$ are never ball-quotients.
\end{theorem}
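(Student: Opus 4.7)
The plan is to use the equality case of the Bogomolov--Miyaoka--Yau inequality: a ball-quotient surface necessarily satisfies $c_{1}^{2}=3\,c_{2}$. Under the hypotheses $k>6$ and $\ell\geq 4$, the maximum intersection multiplicity in $\mathcal{B}_{k}$ is $3$ and in $\mathcal{P}_{\ell}$ is $\ell$, so the vanishing conditions $t_{k_{\mathcal{L}}}=t_{k_{\mathcal{L}}-1}=t_{k_{\mathcal{L}}-2}=0$ recorded before Theorem~\ref{thm:geometric} are automatic in both ranges. Consequently the Kummer covers $Y_{n}^{\mathcal{B}_{k}}$ and $Y_{n}^{\mathcal{P}_{\ell}}$ are smooth surfaces of general type for every $n\geq 2$, and it is enough to show that $c_{1}^{2}(Y_{n}^{\mathcal{L}})-3\,c_{2}(Y_{n}^{\mathcal{L}})\neq 0$ for all such $n$.

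Starting from the explicit expressions for $c_{1}^{2}/n^{k-3}$ and $c_{2}/n^{k-3}$ derived in the proof of Theorem~\ref{thm:geometric}, I would compute, for each family, the quadratic polynomial $D(n):=(c_{1}^{2}-3c_{2})/n^{k-3}$ in $n$. Writing $T:=\lfloor k(k-3)/6 \rfloor$ for brevity, the B\"or\"oczky case gives
$$D_{\mathcal{B}_{k}}(n) \;=\; (2-\varepsilon-T)\,n^{2} + (4T+2\varepsilon-2)\,n - (7T+\varepsilon+4),$$
while the polyhedral case yields
$$-2\,D_{\mathcal{P}_{\ell}}(n) \;=\; (\ell-1)(\ell-2)\,n^{2} - 4(\ell-1)(\ell+1)\,n + (7\ell^{2}-3\ell+2).$$
The decisive step is a short algebraic simplification of the two discriminants, using $T^{2}-2T-3=(T-3)(T+1)$ in the first case, to obtain the factored forms
$$\mathrm{disc}\,D_{\mathcal{B}_{k}} \;=\; -4(T+1)\bigl(3(T-3)+4\varepsilon\bigr), \qquad \mathrm{disc}\bigl(-2D_{\mathcal{P}_{\ell}}\bigr) \;=\; -12\,\ell(\ell-1)(\ell^{2}-7\ell+4).$$

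Once these factorisations are in place the argument closes immediately for generic parameters. For $k>6$ one has $T\geq 4$, so both factors are strictly positive and $\mathrm{disc}\,D_{\mathcal{B}_{k}}<0$; the quadratic has no real zero at all, which settles the B\"or\"oczky case. The same observation handles $\ell\geq 7$ in the polyhedral case, because $\ell^{2}-7\ell+4>0$ there. The main obstacle is the finite block of remaining values $\ell\in\{4,5,6\}$, for which the discriminant is \emph{positive}, equal to $1152$, $1440$, $720$ respectively. Here a direct numerical check is required: each of these integers has the shape $m^{2}\cdot d$ with $d\in\{2,10,5\}$ square-free and greater than $1$, so the square root of the discriminant is irrational, and neither root of the corresponding quadratic is an integer. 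No admissible $n\in\mathbb{Z}_{\geq 2}$ therefore produces $c_{1}^{2}=3c_{2}$, completing the proof. The crucial technical ingredient is thus the clean factorisation of the two discriminants; after that the theorem reduces to a couple of inequalities and a three-point check.
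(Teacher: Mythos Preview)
Your discriminant strategy is sound and genuinely different from the paper's argument. The paper works with $P(n)=3c_{2}-c_{1}^{2}$, makes the shift $x=n-1$ to obtain the compact form $P(x)=x^{2}(f_{0}-k)-2x(f_{1}-2f_{0})+4(f_{0}-t_{2})$, and then argues that the minimum of $P$ over the relevant integer range of $(k,x)$ is strictly positive, recording the explicit minima $P_{\mathcal{P}_{4}}(3)=7$ and $P_{\mathcal{B}_{8}}(1)=20$. Your route via the discriminant of the quadratic in $n$ is equally valid and arguably more systematic, since it sidesteps the two-variable minimisation that the paper leaves as ``easy to observe''.

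There is, however, a computational slip in your polyhedral case that you should repair. You imported the explicit Chern numbers for $\mathcal{P}_{k}$ from the proof of Theorem~\ref{thm:geometric}, but those displayed expressions carry small errors in the $n^{1}$ and $n^{0}$ coefficients (check them against the general formula for $P(n)$ with $t_{2}=\ell$, $t_{3}=\binom{\ell}{2}$, $t_{\ell}=1$ and $2\ell$ lines; one finds $2\ell-f_{1}+f_{0}=1-\ell^{2}+\ell$, not $1-\ell^{2}$). The correct polynomial is
\[
-2D_{\mathcal{P}_{\ell}}(n)\;=\;(\ell-1)(\ell-2)\,n^{2}-4(\ell^{2}-\ell-1)\,n+(7\ell^{2}-5\ell+2),
\]
whose discriminant factors as $-4\ell(\ell-3)(3\ell^{2}-9\ell+8)$ and is therefore strictly negative for every $\ell\geq 4$. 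So with the corrected coefficients your argument actually becomes \emph{simpler}: the three-point check at $\ell\in\{4,5,6\}$ is an artefact of the inherited typo, and the polyhedral case is handled uniformly, exactly as in your (correct) treatment of the B\"or\"oczky family.
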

\begin{proof}
Let us define (via \cite[Section 3.1]{Hirzebruch}) the quadratic polynomial of a configuration $\mathcal{L}_{k}$
$$P_{\mathcal{L}_{k}}(n) = \frac{3c_{2}(Y_{n}^{\mathcal{L}_{k}}) - c_{1}^{2}(Y_{n}^{\mathcal{L}_{k}})}{n^{k-3}} = n^{2}(f_{0}-k) + 2n(k-f_{1}+f_{0})+2f_{1}+f_{0}-k-4t_{2}.$$
Hirzebruch showed that for a line configuration with $t_{k}=t_{k-1}=t_{k-2} = 0$ the quadratic polynomial $P_{\mathcal{L}_{k}}(n)$ is non-negative for every integer $n \in \mathbb{Z}$. We need to show that $P_{\mathcal{B}_{k}}(n) > 0$ for every $n \geq 2$, $k \geq 8$ and $P_{\mathcal{P}_{k}}(n) >0$ for $n \geq 2$, $k \geq 4$. After a simple change of coordinates $x \rightarrow n+1$ we can express $P_{\mathcal{L}_{k}}(n)$ as
$$P_{\mathcal{L}_{k}}(x) = x^{2}(f_{0}-k)-2x(f_{1}-2f_{0})+4(f_{0}-t_{2}).$$
For $\mathcal{P}_{k}$ we have
$$P_{\mathcal{P}_{k}}(x) =x^{2}\frac{(k-1)(k-2)}{2} - x(k^{2}+k-4) + 2k^{2}-2k+4.$$
Now it is easy to observe that $P_{\mathcal{P}_{k}}(x)$ has a minimum at $k = 4, x=3$ with $P_{\mathcal{P}_{4}}(3) = 7$, which completes the proof for $\mathcal{P}_{k}$.

The quadratic polynomial for $\mathcal{B}_{k}$ has the form
$$P_{\mathcal{B}_{k}}(x) =x^{2} \bigg( \varepsilon - 2 + \left\lfloor \frac{k(k-3)}{6} \right\rfloor \bigg) - 2x \bigg(1 + \left\lfloor \frac{k(k-3)}{6} \right\rfloor \bigg) +4 \bigg( 1+ \left\lfloor \frac{k(k-3)}{6}\right\rfloor\bigg).$$
Again, it is easy to observe that $P_{\mathcal{B}_{k}}(x)$ has a minimum at $k=8$, $x=1$ with $P_{\mathcal{B}_{8}}(1) = 20$.

\end{proof}
Let us now consider Klein's configuration of lines \cite{Kle79}, which will be denoted in the sequel by $\mathcal{K}_{21}$. Firstly, $\mathcal{K}_{21}$ consists of $21$ lines and has exactly $28$ triple points and $21$ quadruple points. Moreover, this configuration cannot be realized over the real numbers. Observe that plugging this data into Hirzebruch's inequality one has
$$t_{2} + \frac{3}{4}t_{3} = 28\cdot \frac{3}{4} \geq k + \sum_{r\geq 5} (r-4)t_{r} = 21 + 0 \cdot 21.$$
Let us now compute the Chern numbers for $Y_{n}^{\mathcal{K}_{21}}$. Again, some computations provide
$$c_{1}^{2}(Y_{n}^{\mathcal{K}_{21}})/n^{18} = 212n^{2}-392n+140,$$
$$c_{2}(Y_{n}^{\mathcal{K}_{21}})/n^{18} = 80n^{2}-196n+168.$$
Firstly,
$$ {\rm Ch}_{n}(Y_{n}^{\mathcal{K}_{21}}) = 2.65,$$
and for $n=4$ we obtain the maximal value of the Chern slope, which is equal to
$$\frac{c_{1}^{2}(Y_{4}^{\mathcal{K}_{21}})}{c_{2}(Y_{4}^{\mathcal{K}_{21}})} = 2.95783.$$
This example shows that being a ball quotients is a very delicate property. It would be quite interesting to detect some other invariants which allow to decide whether a certain configuration of lines delivers a ball quotient as the Kummer extension.

\section{$d$-configurations and their characteristic numbers}
We start with the following definition.
\begin{definition}
Let $\mathcal{C} = \{C_{1}, ...,C_{k}\} \subset \mathbb{P}^{2}_{\mathbb{C}}$ be a configuration of $k\geq 4$ curves. We say that $\mathcal{C}$ is a $d$-configuration if
\begin{itemize}
\item all curves are smooth of degree $d\geq 2$,
\item all intersection points are transveral,
\item $t_{k}=0$.
\end{itemize}
\end{definition}

For such configurations of curves we have the following combinatorial equality 
\begin{equation}
\label{eq:comb}
d^{2} (k^{2}-k) = \sum_{r \geq 2} (r^{2} - r)t_{r}.
\end{equation}
We say that a $d$-configuration is \emph{general} if $t_{r} = 0$ for $r \geq 3$.

The key observation is that one can mimic Hirzebruch's argumentation in our setting of $d$-configurations \cite{PRSz}.
The first step is to construct an abelian cover $X_{n}$ of order $n^{k-1}$ branched along a given $d$-configuration and after the minimal desingularization we obtain the surface $Y_{n}^{\mathcal{C}}$ having the following characteristic numbers

$$\frac{c_{2}(Y_{n}^{\mathcal{C}})}{n^{k-3}} = n^{2}(3 +(d^{2}-3d)k +f_{1}-f_{0})+n(-(d^{2}-3d)k-2f_{1}+2f_{0})+f_{1}-t_{2},$$
\begin{multline*}
\frac{c_{1}^{2}(Y_{n}^{\mathcal{C}})}{n^{k-3}} = n^{2}( 9+d^{2}k-6dk + 3f_{1} - 4f_{0}) +2n(-(d^{2}-3d)k-2f_{1}+2f_{0}) + \\ + 3dk + (d^{2}-3d)k+f_{1}-f_{0}+t_{2}.
\end{multline*}

Under the assumption that $t_{k}=0$ and $n\geq 2$ one can show that $Y_{n}^{\mathcal{C}}$ has non-negative Kodaira dimension (in fact the canonical divisor is big and nef) and we can use the Bogomolov-Miyaoka-Yau inequality \cite{M77}, namely
$$c_{1}^{2}(Y_{n}^{\mathcal{C}}) \leq 3c_{2}(Y_{n}^{\mathcal{C}}).$$
This allows us to deduce the following Hirzebruch-type inequality for $d$-configurations with $d\geq 3$.

\begin{theorem}(\cite{PRSz})
\label{thm:hirz} Let $\mathcal{C} \subset \mathbb{P}^{2}_{\mathbb{C}}$ be a $d$-configuration, then
$$\bigg(\frac{7}{2}d^{2}-\frac{9}{2}d\bigg)k + t_{2} + t_{3} \geq \sum_{r \geq 4} (r-4)t_{r}.$$
\end{theorem}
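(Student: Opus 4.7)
The plan is to apply the Bogomolov-Miyaoka-Yau inequality to the Kummer cover $Y_n^{\mathcal{C}}$ at a carefully chosen integer $n$. The paragraph preceding the theorem has already established that $K_{Y_n^{\mathcal{C}}}$ is big and nef whenever $t_k = 0$ and $n \geq 2$, so BMY supplies the bound $c_1^2(Y_n^{\mathcal{C}}) \leq 3\,c_2(Y_n^{\mathcal{C}})$ for every integer $n \geq 2$. I would introduce the auxiliary polynomial
$$P_{\mathcal{C}}(n) \;:=\; \frac{3\,c_2(Y_n^{\mathcal{C}}) - c_1^2(Y_n^{\mathcal{C}})}{n^{k-3}} \;\geq\; 0$$
and rewrite it by subtracting the two formulas displayed just before the theorem. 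This is routine bookkeeping and produces
$$P_{\mathcal{C}}(n) \;=\; n^2\bigl(2d^2 k - 3dk + f_0\bigr) + n\bigl(-d^2 k + 3dk - 2f_1 + 2f_0\bigr) + \bigl(f_0 + 2f_1 - d^2 k - 4t_2\bigr).$$

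The key step, and the only inspired one, is the evaluation at $n = 3$. Substituting gives $P_{\mathcal{C}}(3) = 14d^2 k - 18dk + 16f_0 - 4f_1 - 4t_2$, and the choice $n=3$ is dictated by the identity
$$16f_0 - 4f_1 \;=\; \sum_{r \geq 2}(16 - 4r)\,t_r \;=\; 8t_2 + 4t_3 - 4\sum_{r \geq 4}(r-4)\,t_r,$$
in which the coefficient of $t_4$ vanishes and the coefficient at $r$-fold points for $r \geq 4$ is precisely $-4(r-4)$. Inserting this into $P_{\mathcal{C}}(3) \geq 0$ and dividing through by $4$ yields exactly the claimed inequality $\bigl(\tfrac{7}{2}d^2 - \tfrac{9}{2}d\bigr)k + t_2 + t_3 \geq \sum_{r \geq 4}(r-4)\,t_r$.

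The main obstacle is purely computational: one must carry the coefficients correctly when forming $3c_2 - c_1^2$, and no further combinatorial identity is needed — in particular, the relation $d^2(k^2 - k) = f_2 - f_1$ recorded in \eqref{eq:comb} plays no role in this derivation. The whole conceptual content of the argument is therefore concentrated in two ingredients already available in the paper, namely the BMY inequality for $Y_n^{\mathcal{C}}$ and the explicit Chern-number formulas recorded just above the theorem; the only thing one must find is the right specialization of $n$, and $n = 3$ is essentially forced by the requirement that the weights $16 - 4r$ match the target weights $-4(r-4)$.
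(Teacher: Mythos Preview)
Your argument is correct and is precisely the approach the paper sketches (deferring the full details to \cite{PRSz}): form $3c_2(Y_n^{\mathcal{C}})-c_1^2(Y_n^{\mathcal{C}})$ from the displayed Chern-number formulas, invoke BMY for $n\geq 2$, and specialise at $n=3$ so that the weights become $16-4r=-4(r-4)$. All the coefficient computations check out, and dividing $P_{\mathcal{C}}(3)\geq 0$ by $4$ indeed yields the stated inequality.
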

For configurations of smooth conics we have the following result due to Tang \cite[Theorem 3.1 with $x=2$]{LT}.
\begin{theorem}
\label{thm:tang}
Let $\mathcal{C}$ be a $2$-configuration, then
$$5k + t_{2} + t_{3} \geq \sum_{r\geq 4}(r-4)t_{r}.$$
\end{theorem}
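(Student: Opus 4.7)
The plan is to adapt the Bogomolov--Miyaoka--Yau argument that yields Theorem~\ref{thm:hirz} to the case $d=2$, specialised to a single well-chosen value of $n$. First I would reuse the construction recalled just before Theorem~\ref{thm:hirz}: form the abelian cover $X_n\to\P^2$ of degree $n^{k-1}$ branched along $C_1,\ldots,C_k$, minimally resolve the singularities sitting over the $r$-fold points with $r\geq 3$, and substitute $d=2$ into the general formulas for $c_1^2(Y_n^{\mathcal{C}})$ and $c_2(Y_n^{\mathcal{C}})$ already displayed in the excerpt. This produces two explicit quadratic polynomials in $n$ whose coefficients are linear in $k$, $t_2$, $f_0$, $f_1$.

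Next I would apply $c_1^2(Y_n^{\mathcal{C}})\leq 3c_2(Y_n^{\mathcal{C}})$ and evaluate the resulting polynomial $3c_2-c_1^2$ at $n=3$. The leading coefficient in $n$ works out to $2k+f_0>0$, and the evaluation at $n=3$ collapses, after using the standard identities $f_0=\sum_{r\geq 2}t_r$ and $f_1=\sum_{r\geq 2}r\,t_r$ together with the intersection identity \eqref{eq:comb}, precisely to the desired inequality $5k+t_2+t_3\geq\sum_{r\geq 4}(r-4)t_r$. The choice $n=3$ is essentially forced: checking $n=2$ already spoils the signs in front of the high-valency counts $t_r$, so no smaller $n$ yields the clean $(r-4)$ weighting on the right-hand side. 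This part is entirely routine once the Chern data are in place.

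The main obstacle is verifying that Bogomolov--Miyaoka--Yau is actually applicable to $Y_3^{\mathcal{C}}$, i.e.\ that its minimal model has non-negative Kodaira dimension, or better, that $K_{Y_n^{\mathcal{C}}}$ is big and nef. For $d\geq 3$ this follows from the argument of \cite{PRSz} as soon as $t_k=0$ and $n\geq 2$, but for $d=2$ the coefficient $d^{2}-3d=-2$ appearing in the branch data is negative, so the Hurwitz-type computation of the canonical bundle is tighter and the admissible range of $n$ shrinks. I would compute $K_{Y_n^{\mathcal{C}}}$ explicitly as a $\Q$-linear combination of the pullback of $\calo_{\P^2}(1)$ and of the exceptional divisors created in the resolution, then test nefness against the strict transforms of the branch conics and against each exceptional component, and check bigness via the leading term $(2k+f_0)n^2$ of $3c_2-c_1^2$. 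The expectation is that $n=3$ is precisely the first value at which both nefness and bigness survive under the hypothesis $t_k=0$ built into the definition of a $d$-configuration; pinning down this threshold cleanly, in the unavoidable presence of conics meeting in high-multiplicity points, is the step where the whole strategy could break if the $d=2$ case turned out to require an additional non-degeneracy assumption on $\mathcal{C}$.
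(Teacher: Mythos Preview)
The paper does not give its own proof here; it quotes the result from Tang \cite[Theorem~3.1]{LT}, whose argument is exactly the one you outline: apply Bogomolov--Miyaoka--Yau to the abelian cover $Y_n^{\mathcal{C}}$ branched along the conics and evaluate $3c_2-c_1^2$ at $n=3$ (indeed the same value $n=3$ is what yields Theorem~\ref{thm:hirz} for every $d$, and its $d=2$ specialisation is precisely $5k+t_2+t_3\geq\sum_{r\geq4}(r-4)t_r$, so your arithmetic is correct). Your caution about whether $K_{Y_3^{\mathcal{C}}}$ is big and nef when $d=2$ is the only substantive step left; the paper asserts just before Theorem~\ref{thm:hirz} that this holds for all $d$-configurations with $t_k=0$ as soon as $n\geq 2$, and Tang carries out the verification for conics explicitly, so the strategy does not break.
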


In the spirit of Hirzebruch's paper we define the following characteristic number of a $d$-configuration $\mathcal{C}$.
\begin{definition}
Let $\mathcal{C}$ be a $d$-configuration of curves. Then
$$\gamma(\mathcal{C}) = \lim_{n \rightarrow \infty} \frac{c_{1}^{2}(Y_{n}^{\mathcal{C}})}{c_{2}(Y_{n}^{\mathcal{C}})} = \frac{ 9+d^{2}k-6dk + 3f_{1} - 4f_{0} }{ 3 +(d^{2}-3d)k +f_{1}-f_{0} }$$ is the characteristic number of $\mathcal{C}$ provided that $c_{2}(Y_{n}^{\mathcal{C}}) > 0$.
\end{definition}
\begin{remark}
In his PhD thesis \cite{GU} G. Urz\'ua considered the so-called logarithmic Chern slopes for logarithmic surfaces. It turns out that these logarithmic Chern slopes coincide in our setting with characteristic numbers of $d$-configurations.
\end{remark}
Now we are ready to present our result of this section.
\begin{theorem}
\label{thm:chern}
Let $\mathcal{C}\subset \mathbb{P}^{2}_{\mathbb{C}}$ be a $d$-configuration. Then
$$\gamma(\mathcal{C}) < \frac{8}{3}.$$
\end{theorem}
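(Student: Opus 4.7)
The plan is to reduce the inequality $\gamma(\mathcal{C}) < 8/3$ directly to the Hirzebruch-type inequalities already recorded as Theorems~\ref{thm:hirz} and~\ref{thm:tang}. Since the characteristic number is well defined, the denominator $3 + (d^2-3d)k + f_1 - f_0$ in the formula for $\gamma(\mathcal{C})$ is strictly positive, so cross-multiplying preserves the direction of the inequality. After distributing and collecting terms, $\gamma(\mathcal{C}) < 8/3$ is equivalent to
$$4f_0 - f_1 + (5d^2 - 6d)k \;>\; 3.$$
The appearance of the coefficient $5d^2-6d$ is what makes the strategy work: the Hirzebruch-type bounds for $d$-configurations carry the coefficient $\tfrac{7}{2}d^2 - \tfrac{9}{2}d$, and the difference $\tfrac{3}{2}d(d-1)$ will dominate the constant $3$ on the right-hand side.

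The next step is to rewrite $4f_0 - f_1$ as a weighted count of singular points:
$$4f_0 - f_1 \;=\; \sum_{r\geq 2}(4-r)t_r \;=\; 2t_2 + t_3 - \sum_{r\geq 5}(r-4)t_r,$$
where the $r=4$ contribution conveniently drops out. Substituting, the target becomes
$$2t_2 + t_3 + (5d^2 - 6d)k - 3 \;>\; \sum_{r\geq 5}(r-4)t_r,$$
so the remaining task is to bound the right-hand side from above using a Hirzebruch-type estimate.

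For $d \geq 3$ I would invoke Theorem~\ref{thm:hirz}, which gives $\sum_{r\geq 5}(r-4)t_r \leq \bigl(\tfrac{7}{2}d^2 - \tfrac{9}{2}d\bigr)k + t_2 + t_3$ (recall that $r-4=0$ for $r=4$, so the sums over $r\geq 4$ and over $r\geq 5$ agree). Substituting this into the target collapses it to
$$t_2 + \tfrac{3}{2}d(d-1)k \;>\; 3,$$
which is automatic since $d\geq 3$ and $k\geq 4$ force the left-hand side to be at least $36$. For the residual case $d=2$ I would apply Tang's Theorem~\ref{thm:tang} in exactly the same manner, reducing the problem to $t_2 + 3k > 3$, which also holds trivially under the standing assumption $k\geq 4$ in the definition of a $d$-configuration.

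The argument is little more than algebraic reorganization plus an already-proven inequality, so I do not expect a serious conceptual obstacle. The one point requiring care is that the \emph{strict} inequality $\gamma(\mathcal{C})<8/3$ must survive the chain of estimates: this is fine because the Hirzebruch and Tang inequalities are weak, whereas the resulting constraints $t_2 + \tfrac{3}{2}d(d-1)k > 3$ and $t_2 + 3k > 3$ hold strictly under the hypotheses, so the strict sign is preserved. A second bookkeeping check, worth performing when writing the proof, is the verification that the coefficient arithmetic $(5d^2-6d) - (\tfrac{7}{2}d^2-\tfrac{9}{2}d) = \tfrac{3}{2}d(d-1)$ really does make the superfluous terms cancel; this is what turns an \emph{a priori} opaque inequality into a manifestly trivial one.
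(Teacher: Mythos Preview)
Your proof is correct and follows essentially the same route as the paper's: cross-multiply $\gamma<8/3$ into a linear inequality in the $t_r$, then absorb it into the Hirzebruch-type bound (Theorem~\ref{thm:hirz} for $d\ge 3$, Theorem~\ref{thm:tang} for $d=2$), landing on the same residual estimate $t_2+\tfrac{3}{2}d(d-1)k>3$ (respectively $t_2+3k>3$). The only cosmetic differences are that the paper phrases the argument by contradiction and devotes a short paragraph to proving directly that the denominator $3+d(d-3)k+f_1-f_0$ is positive (for $d=2$ this uses that $Y_n^{\mathcal{C}}$ is of general type, hence $c_2>0$), whereas you read this positivity off the well-definedness hypothesis built into the definition of $\gamma$.
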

\begin{proof}
We show that $3 + d(d-3)k + f_{1} - f_{0} > 0$. This is obviously true for $d\geq 3$ without any argument. Let us now focus on the case $d=2$. Since for $n\geq 2$ surfaces $Y_{n}^{\mathcal{C}}$ are of general type, thus $c_{2}(Y_{n}^{\mathcal{C}}) > 0$ and this implies $f_{1} - f_{0}\geq 2k-3$. Therefore, the linear coefficient of $c_{2}(Y_{n}^{\mathcal{C}})$ is negative and hence $f_{1}-f_{0} > 2k-3$, which completes the first part.

Suppose that $\gamma \geq \frac{8}{3}$. Then we have

$$3d^{2}k-18dk+27+9f_{1}-12f_{0} \geq 24 + 8d^{2}k-24dk+8f_{1}-8f_{0},$$
which leads to 
$$3 + 6dk -5d^{2}k+f_{1}-4f_{0} \geq 0.$$
Now we need to consider two cases, namely
\begin{enumerate}
\item $d \geq 3$. Rewriting the above inequality we have
$$3 + 6dk -5d^{2}k + \sum_{r \geq 4}(r-4)t_{r} \geq 2t_{2} + t_{3} \stackrel{(\star)}{\geq}  t_{2} - \frac{7}{2}d^{2}k + \frac{9}{2}dk + \sum_{r \geq 4}(r-4)t_{r},$$
where the inequality $(\star)$ follows directly from Theorem \ref{thm:hirz}.
We obtain
$$2t_{2} \leq 6 + 3dk(1-d) <0.$$
On the other hand, we know that $t_{2} \geq 0$, a contradiction.
\item $d = 2$. We have
$$3 - 8k + \sum_{r \geq 4}(r-4)t_{r} \geq 2t_{2} + t_{3}.$$
Using Theorem \ref{thm:tang} in $(!)$ we obtain
$$3 - 8k + \sum_{r \geq 4}(r-4)t_{r} \stackrel{(!)}{\geq} t_{2} - 5k + \sum_{r\geq 4}(r-4)t_{r}.$$
This leads to
$$t_{2} \leq 3-3k <0,$$
a contradiction.
\end{enumerate}
\end{proof}
Let us present some examples.
\begin{example}
Let $\mathcal{C}\subset \mathbb{P}^{2}$ be a general $d$-configuration with $d\geq 2$. By the combinatorial equality we have $2t_{2} = d^{2}(k^{2}-k)$ and 
$$\gamma(\mathcal{C}) = 2 \cdot \frac{(dk-3)^{2}}{d^{2}k^{2}+(d^{2}-6d)k+6}.$$
In particular, $\gamma = 2$ provided that $k \rightarrow \infty$, independently of $d$.
\end{example}

\begin{example}
Consider the following conic configuration $\mathcal{AP}$ of $6$ conics and $t_{5}=6$. 
Then
$$\gamma(\mathcal{AP}) = \frac{27}{15}.$$
\end{example}
\begin{example}
In \cite{AD} Dolgachev and Artebani constructed the Hesse configuration of conics $\mathcal{H}$, which consists of $12$ conics with $t_{2} = 12$ and $t_{8} = 9$.

Then $$\gamma(\mathcal{H}) = \frac{13}{6}.$$
\end{example}
It is worth pointing out here that the theory of $d$-configurations is substantially more complicated comparing with the theory of line configurations and we found only several papers strictly devoted to them.

At the end of the paper, we would like to address two quite challenging problems. 
\begin{question}
Does there exist a family $\{\Delta_{n}\}$ of $d$-configurations for fixed $d \geq 2$ such that $\gamma(\Sigma_{n})$ is coverging to $\alpha > 5/2$ provided that $n \rightarrow \infty$?
\end{question}
An analogous question for line configurations is open. It is expected that we have only finitely many line configurations for which the characteristic numbers are contained in the region $(5/2, 8/3]$. This also suggest the following really challenging conjecture.
\begin{conjecture}
For a given $\varepsilon > 0$ and fixed $d \geq 2$ there exist only finitely many $d$-configurations $\mathcal{C}$ such that $\gamma(\mathcal{C}) > 2.5 + \varepsilon$.
\end{conjecture}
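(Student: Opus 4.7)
The plan is to reduce the hypothesis $\gamma(\mathcal{C}) > 5/2 + \varepsilon$ to an inequality involving only $k$ and the numbers $t_r$, combine it with the Hirzebruch-type inequality of Theorem~\ref{thm:hirz} (or Tang's Theorem~\ref{thm:tang} for $d=2$) and with the combinatorial identity \eqref{eq:comb}, and extract a bound $k \leq K(\varepsilon, d)$. Once $k$ is bounded, \eqref{eq:comb} makes every $t_r$ bounded, leaving only finitely many combinatorial types of configurations; a standard moduli argument---smooth plane curves of degree $d$ through a prescribed transverse singular scheme vary in a finite-dimensional family, and the scheme itself has only finitely many components---then yields finiteness of projective equivalence classes.

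For the key reduction, let $A = 3 + d(d-3)k + f_1 - f_0$ denote the leading-in-$n$ coefficient of the denominator appearing in $\gamma(\mathcal{C})$. Computing $2N - 5A$ with $N = 9 + d^2 k - 6dk + 3f_1 - 4f_0$ yields the clean form
$$\gamma(\mathcal{C}) - \frac{5}{2} \;=\; \frac{f_1 - 3f_0 + 3 - 3d(d-1)k}{2A},$$
and since $Y_n^{\mathcal{C}}$ is of general type for $n \geq 2$ we have $A > 0$, so the hypothesis translates into
$$\sum_{r\geq 2}(r-3)t_r \;>\; 3d(d-1)k - 3 + 2\varepsilon A.$$
Setting $h(d) = (7d^2-9d)/2$ for $d \geq 3$ and $h(2) = 5$, Theorem~\ref{thm:hirz} (respectively Theorem~\ref{thm:tang}) gives the complementary upper bound $\sum_{r\geq 4}(r-4)t_r \leq h(d)k + t_2 + t_3$; adding $\sum_{r\geq 4}t_r = f_0 - t_2 - t_3$ and subtracting $t_2$ yields $\sum_{r\geq 2}(r-3)t_r \leq h(d)k + f_0 - t_2$. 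The two inequalities combine to
$$[h(d) - 3d(d-1)]k + f_0 - t_2 + 3 \;>\; 2\varepsilon A,$$
and one hopes to conclude by feeding in a quadratic-in-$k$ lower bound on $A$ obtained from \eqref{eq:comb} together with some control on the maximal multiplicity $r_{\max}$.

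The main obstruction is exactly that last step. For $d \geq 3$ the coefficient $h(d) - 3d(d-1) = d(d-3)/2$ is non-negative, and for $d=2$ it equals $-1$, so the plain combination above does not force $k$ to be bounded once $f_0$ is allowed to grow like $k^2$; indeed, the $t_4 \sim ck^2$ corollary preceding the conjecture (for $d=1$) shows that quadratic growth of a single $t_r$ with $r \geq 4$ would produce $\gamma \to 8/3$ and would furnish infinitely many configurations with $\gamma > 5/2 + \varepsilon$. In effect, the conjecture is equivalent to the assertion that no sequence of $d$-configurations admits quadratic growth of $t_r$ for any $r \geq 4$---a statement strictly stronger than anything Theorems~\ref{thm:hirz} and \ref{thm:tang} currently deliver. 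Closing the gap will require either a sharpened Hirzebruch/Tang-type inequality, plausibly derived from Langer's refinement of the Bogomolov--Miyaoka--Yau inequality applied directly to $Y_n^{\mathcal{C}}$, or an independent geometric exclusion of such quadratic $t_r$-growth in the spirit of the Sylvester--Gallai/Melchior results used in the line case. This is the genuine difficulty and explains why the statement is posed as a conjecture rather than a theorem.
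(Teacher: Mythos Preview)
The statement you were asked to prove is labelled \emph{Conjecture} in the paper and is left open there; the authors provide no proof, only the remark that it is ``really challenging.'' So there is nothing to compare your attempt against.

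Your write-up is not a proof either, and you say so yourself: you carry out the natural reduction, check that the Hirzebruch-type inequalities of Theorems~\ref{thm:hirz} and~\ref{thm:tang} are too weak to close the argument, and correctly isolate the obstruction---namely that nothing currently rules out quadratic growth of some $t_r$ with $r\geq 4$. Your algebra is accurate: the identity
\[
\gamma(\mathcal{C})-\tfrac{5}{2}=\frac{f_1-3f_0+3-3d(d-1)k}{2A}
\]
is correct, as is the combination yielding $[h(d)-3d(d-1)]k+f_0-t_2+3>2\varepsilon A$ and the computation $h(d)-3d(d-1)=d(d-3)/2$ for $d\geq 3$ and $-1$ for $d=2$. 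Your diagnosis that the conjecture is essentially equivalent to excluding quadratic growth of $t_r$ for $r\geq 4$ matches the spirit of the paper's own discussion just before the conjecture (Corollary~2.16 for lines). In short: you have not proved the conjecture, but neither have the authors; your analysis of why the available tools fall short is sound and is exactly the kind of heuristic the paper itself offers in lieu of a proof.
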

\section*{Acknowledgement}
The second author is partially supported by National Science Centre Poland Grant 2014/15/N/ST1/02102 and during the project he was a member of SFB $45$ "Periods, moduli spaces and arithmetic of algebraic varieties". Both authors would like to express their gratitude to Stefan M\"uller-Stach and Duco van Straten for stimulating conversations, and the second author would like to thank Roberto Laface for useful remarks and Giancarlo Urz\'ua for providing his insights about logarithmic Chern slopes. A part of this note was developed during the mini-Semester on Algebraic Geometry in Warsaw supported by the Simons Foundation. At last, we would like to thank the anonymous referee for useful suggestions, especially for those devoted to characteristic numbers.

%*****************************************************************************

%***************************************************************************** % Addresses

\bigskip
   Piotr Pokora,
   Instytut Matematyki,
   Pedagogical University of Cracow,
   Podchor\c a\.zych 2,
   PL-30-084 Krak\'ow, Poland.

Current Address:
    Institut f\"ur Mathematik,
    Johannes Gutenberg-Universit\"at Mainz,
    Staudingerweg 9,
    D-55099 Mainz, Germany.
\nopagebreak
   \textit{E-mail address:} \texttt{piotrpkr@gmail.com, pipokora@uni-mainz.de}

\bigskip
    Adam Czapli\'nski,
    Institut f\"ur Mathematik,
    Johannes Gutenberg-Universit\"at Mainz,
    Staudingerweg 9,
    D-55099 Mainz, Germany.
\nopagebreak
   \textit{E-mail address:} \texttt{czaplins@uni-mainz.de}

%*****************************************************************************

\end{document}